\pgfplotsset{compat=1.18}
\newtheorem{theorem}{Theorem}[section]
\newtheorem{lemma}{Lemma}[section]
\journal{Elsevier}
\begin{document}

\begin{frontmatter}

\title{A direct PinT algorithm for higher-order nonlinear time-evolution equations}

%% or include affiliations in footnotes:
\author[address1]{Shun-Zhi Zhong}
%\cortext[correspondingauthor]{Corresponding authors}
\ead{szzhongm@sina.com}

\author[address1]{Yong-Liang Zhao\corref{correspondingauthor}}
\cortext[correspondingauthor]{Corresponding author}
%\cortext[correspondingauthor]{Corresponding authors}
\ead{ylzhaofde@sina.com}

\author[address1]{Qian-Yu Shu}
%\cortext[correspondingauthor]{Corresponding authors}
\ead{shuqy@sicnu.edu.cn}

%\author[address3]{Meng Li}
%\ead{limeng@zzu.edu.cn}

%\author[address2]{Alexander Ostermann}
%\ead{alexander.ostermann@uibk.ac.at}

\address[address1] {School of Mathematical Sciences, 
	Sichuan Normal University, Chengdu, Sichuan 610066, P.R. China}
%\address[address2]{Department of Mathematics, University of Innsbruck, Technikerstra{\ss}e 13, Innsbruck 6020, Austria}
%\address[address2] {School of Mathematical Sciences, 
%	Sichuan Normal University, Chengdu, Sichuan 610066, P.R. China}
%%\address[address3] {School of Mathematics and Statistics, Zhengzhou University,
%%Zhengzhou, Henan 450001, P.R. China}
%\address[address2] {School of Mathematical Sciences, 
%	Sichuan Normal University, Chengdu, Sichuan 610066, P.R. China}

\begin{abstract}
	Higher-order nonlinear time-evolution equations have widespread applications in science and engineering, such as in solid mechanics, materials science, and fluid mechanics. This paper mainly studies a direct time-parallel algorithm for solving time-dependent differential equations of orders 1 to 3. Different from the traditional time-stepping approach, we directly solve the all-at-once system from higher-order evolution equations by diagonalization the time discretization matrix $B$. Based on the connection between the characteristic equation and Chebyshev polynomials, we give explicit formulas for the eigenvector matrix  $V$ of $B$ and its inverse  $V^{-1}$. We prove that  $Cond_2\left( V \right) =\mathcal{O} \left( n^3 \right)$, where $n$ is the number of time steps. A direct parallel-in-time algorithm is designed by exploring the structure of the spectral decomposition of $B$. Numerical experiments are provided to show the significant computational speedup of the proposed algorithm. 
\end{abstract}

\begin{keyword}
	Direct PinT algorithms, Diagonalization technique, Higher-order nonlinear equation, All-at-once system
	\MSC[2020] 35G20 \sep 65N12 \sep 65Y05
\end{keyword}
\end{frontmatter}
%\begin{msc}
%	65M55, 65N12, 65M15, 65Y05
%\end{msc}

\section{Introduction}
\label{sec1}
For time evolution problems, parallelization in the time direction has been a highly active research topic in recent years. This is because many time evolution problems cannot be quickly computed to obtain the desired solutions on modern supercomputers. Introducing parallelization can significantly reduce computational cost and accelerates computational efficiency. For further acceleration in the time direction, at least for strongly dissipative problem, the parallel algorithm in \cite{2001R} and many other variants (e.g., the MGRiT algorithm \cite{2015Parallel} and the PFASST algorithm\cite{2012Toward}) perform very well in this aspect. However, for wave propagation problems, since the convergence speed largely depends on dissipativity, the performance of these representative algorithms is not satisfactory. In addition, many studies have improved the convergence behavior of the iterative Parallel-in-Time (PinT) algorithm by modifying the coarse grid correction \cite{2014On,2013Stable,2010Time,2020A,2012Explicit}. As pointed out in \cite{2018Wave}, these modified algorithms either require a large amount of additional computational burden or have very limited applicability.
For detailed studies on time-parallel time integration, see, e.g., \cite{gander2015,GanderLunet2024,2025Time}. In this paper, we focus on the PinT algorithm based on diagonalization techniques, which was first proposed by Maday and Rønquist \cite{2008Parallelization}. For linear ordinary differential  systems with initial value, the idea can be described as follows:
\begin{equation*}
	u^{\prime} \left( t \right) +Au\left( t \right) =f\left( t \right) , \quad u\left( 0 \right) =u_0\in \mathbb{R} ^{m},
\end{equation*}
where $u_0$ and $A\in \mathbb{R} ^{m\times m}$ and $f\left( t \right)$ are known terms. We discretize the time derivative using a finite difference method and formulate it as an all-at-once linear system  
\begin{equation}
	\mathbb{M} \boldsymbol{u}:=\left( B\otimes I_x+I_t\otimes A \right) \boldsymbol{u}=\boldsymbol{b},
	\label{eq1.1:energy}
\end{equation}
where $\boldsymbol{u}=\left[ {u}_1^\top,{u}_2^\top,\cdots ,{u}_n^\top \right] ^\top$ with $u_j\approx u\left( t_j \right) $ ($\{t_j\}_{j=0}^{n}$ is a division of time domain $[0,T]\subset\mathbb{R}$), $\boldsymbol{b}$ contains the initial value and
right-hand-side information, $I_x$ and $I_t$  denote two identity matrices with suitable sizes, respectively. $B\in \mathbb{R} ^{n\times n}$ is the time discretization matrix. If $B$ is diagonalizable, that is, $B=VDV^{-1}$ with $D=\mathrm{diag}\left( \lambda _{1,}\cdots ,\lambda _n \right) $ ($\{\lambda_j\}_{j=1}^{n}$ are the eigenvalue of $B$),  we can factorize $\mathbb{M}$ as 
\begin{equation*}
	\mathbb{M} =\left( V\otimes I_x \right) \left( D\otimes I_x+I_t\otimes A \right) \left( V^{-1}\otimes I_x \right).
\end{equation*}
Thus, \eqref{eq1.1:energy} can be directly solved through the following three steps:
\begin{equation}
	\begin{cases}  
		\boldsymbol{g} = (V^{-1} \otimes I_x) \boldsymbol{b},   & \text{step-(a)}; \\  
		(\lambda_j I_x + A) z_j = g_j, \quad j = 1, 2, \ldots, n, &\text{step-(b)};\\  
		\boldsymbol{u} = (V \otimes I_x) \boldsymbol{z}, & \text{step-(c)};  
	\end{cases}
	\label{eq1.2:energy}
\end{equation}
where $\boldsymbol{w}=\left( {w}_1^\top,{w}_2^\top,\cdots ,{w}_n^\top \right) ^\top$ and $\boldsymbol{g}=\left( {g}_1^\top,{g}_2^\top,\cdots ,{g}_n^\top \right) ^\top$. Obviously, in \eqref{eq1.2:energy}, since the first and third steps require only to performs matrix-vector multiplications, they are inherently parallelizable. The main computational cost comes from the second step. Since these complex linear systems are completely decoupled, it can be solved by some direct or iterative solvers in a parallel pattern.

The core lies in how to accurately and efficiently diagonalize $B$. Using the conventional backward Euler discretization with uniform time steps obviously results in a non-diagonalizable matrix. To achieve diagonalization, different time steps $\left\{ \varDelta t_j \right\} _{j=1}^{n}$ were used in  \cite{2008Parallelization}, which leads to 
\begin{equation*}
	B= \begin{bmatrix}
		\frac{1}{\varDelta t_1}&		&		&		\\
		-\frac{1}{\varDelta t_2}&		\frac{1}{\varDelta t_2}&		&		\\
		&		\ddots&		\ddots&		\\
		&		&		-\frac{1}{\varDelta t_n}&		\frac{1}{\varDelta t_n}\\
	\end{bmatrix}.
\end{equation*}
Clearly, the matrix $B$ is diagonalizable. However, in practical applications, the condition number of its eigenvector matrix  $V$  can be very large, which increases the roundoff errors in step-(a) and step-(c) of \eqref{eq1.2:energy}. This seriously affects the accuracy of the numerical solutions, see \cite{2019A} for details. 
%\begin{equation*}
%	\mathrm{roundoff\ error}=\mathcal{O} \left( \epsilon Cond_2\left( V \right) \right) ,
%\end{equation*}
%where $\epsilon$ is the machine precision. 
They \cite{2019A} considered geometrically increasing step sizes $\left\{ \varDelta t_j=\varDelta t_1\tau ^{n-1} \right\} _{j=1}^{n}$, and through this choice matrix $B$ can be diagonalized, where $\tau >1$ is a parameter. However, the selection of $\tau$ significantly impacts the error, making it particularly challenging to choose an appropriate $\tau$. Numerical results in \cite{2019A} show that $n$ can only be approximately between 20 and 25, to balance rounding errors and discretization errors. Therefore, parallelism is limited for larger $n$. In \cite{2022A}, the authors studied first-order and special second-order partial differential equations (PDEs). In this work, we eliminate the undesirable restriction on  $n$ \cite{2019A}, and extend the special second-order PDEs \cite{2022A} to more general form of first- to third-order PDEs. For time discretization, a central finite differences formula is used in the first $n-1$ steps, and a BDF2 formula is applied in the last step, that is, 
\begin{equation}
	\begin{cases}
		\frac{u_{j+1}-u_{j-1}}{2\varDelta t}+Au_j=f_j, \quad j=1,2,\cdots ,n-1,\\
		\frac{\frac{3}{2}u_n-2u_{n-1}+\frac{1}{2}u_{n-2}}{\varDelta t}+Au_n=f_n,          \\
	\end{cases} 
	\label{eq1.3:energy}
\end{equation}
where $\varDelta t=T/n$ and $f_i=f(t_i) (i=1, \cdots, n)$. The all-at-once system of \eqref{eq1.3:energy} has the form of \eqref{eq1.1:energy}, where $B$ and $\boldsymbol{b}$ are given as follows: 
\begin{equation}
	B=\frac{1}{\varDelta t} \begin{bmatrix}
		0&		\frac{1}{2}&		&		&		\\
		-\frac{1}{2}&		0&		\frac{1}{2}&		&		\\
		&		\ddots&		\ddots&		\ddots&		\\
		&		&		-\frac{1}{2}&		0&		\frac{1}{2}\\
		&		&		\frac{1}{2}&		-2&		\frac{3}{2}\\
	\end{bmatrix}  , \quad \boldsymbol{b}=\left[ \begin{array}{c}
		\frac{u_0}{2\varDelta t}+f_1\\
		f_2\\
		\vdots\\
		f_n\\
	\end{array} \right] .
	% \boldsymbol{u}=\left[ \begin{array}{c}
		%	u_1\\
		%	u_2\\
		%	\vdots\\
		%	u_n\\
		%\end{array} \right] .
		\label{eq1.4:energy}
	\end{equation}
	We mention that there are other diagonalization-based PinT algorithms, which use some preprocessing techniques to handle the all-at-once system \eqref{eq1.1:energy} and perform well for larger  $n$; see \cite{2021A,2022Space,2019note,2022A,2020AFAST,2018Preconditioning,2021Matrix}. As a prominent case of boundary value methods (BVMs)\cite{1998Solving}, the hybrid time discretization scheme \eqref{eq1.3:energy} is frequently employed in computational science and engineering.
	
	Inspired by the pioneering works of \cite{2022A, 2008Parallelization}, we attempt to design a direct PinT algorithm for solving the all-at-once system of \eqref{eq1.3:energy} (rather than iteratively solving it, e.g., \cite{1993Parallel}). Subsequently, we extend this algorithm to solve second- and third-order nonlinear differential equations.
	For first- and second-order differential equations, we also notice some relevant fast time-domain algorithms combined with this algorithm, 
	such as the space-time discretization method \cite{2014Space,2015Multilevel}, 
	the low-rank approximation technique \cite{2011Low,2015Matlab}, the domain decomposition algorithm \cite{2015Domain}, 
	and the inversion algorithm \cite{2022A}. For further details on all-at-once systems, refer to \cite{2020parallel-in-timeiterativealgorithm,lin2024parallel,wu2023pint,wu2022uniform,2023bilateralpreconditioning,2024parallelpreconditioner}. 
	The main contributions of this work  are summarized as folllows:
	
	(I) We theoretically prove that the matrix $B$ in \eqref{eq1.4:energy} can be diagonalized as $B=VDV^{-1}$.
	
	(II) The analytical expressions of $V$, $V^{-1}$ and $D$ are given. We rigorously prove that the condition number of matrix $V$ satisfies $Cond_2\left( V \right) = \mathcal{O} \left( n^3 \right)$. 
	
	(III) A fast algorithm for computing $V^{-1}$ is proposed, which is faster than the MATLAB built-in function $\texttt{eig}$.

The rest of this paper is organized as follows. \autoref{sec2:intro}, our direct PinT algorithm for three nonlinear problems is introduced.
\autoref{sec3} provides analytical expressions of the matrices  $V$  and  $D$. The diagonalizability of $B$ in \eqref{eq1.4:energy} and the condition number of the eigenvector matrix $V$ are studied. 
\autoref{sec4} propose a fast algorithm with complexity  $\mathcal{O} \left( n^2 \right) $  for efficiently computing of $V^{-1}$.
\autoref{sec5} displays some numerical experiments to verify our theoretical results.
\autoref{sec6} concludes this paper.
\section{The PinT algorithm for nonlinear problems} 
\label{sec2:intro}
%\vspace{-3mm}
In this section, we introduce the time discretization and our direct PinT algorithm to solve nonlinear differential equations with general time derivatives of orders 1 to 3.

\subsection{First-order problems}\label{sec2.1}
We first consider the following first-order problem
\begin{equation}
	u' \left( t \right) +f\left( u\left( t \right) \right) =0, \quad u\left( 0 \right) =u_0,
	\label{eq2.1:energy}
\end{equation}
where $t\in \left( 0,T \right] $, $u\left( t \right) \in \mathbb{R} ^m$, and $f: \left[ 0,T \right] \times \mathbb{R} ^m\rightarrow \mathbb{R} ^m$. 
Applying the time discretization \eqref{eq1.3:energy} to \eqref{eq2.1:energy}, the all-at-once system of \eqref{eq2.1:energy} is written as
\begin{equation}
	\left( B\otimes I_x \right) \boldsymbol{u}+F\left( \boldsymbol{u} \right) =\boldsymbol{b},
	\label{eq2.3:energy}
\end{equation}
where $F\left( \boldsymbol{u} \right) =\left( f^\top\left( u_1 \right) ,f^\top\left( u_2 \right) ,\cdots ,f^\top\left( u_n \right) \right) ^\top$ and $\boldsymbol{b}=\left( \frac{{u}_0^\top}{2\varDelta t},0,\cdots ,0 \right) ^\top$. Using the standard Newton's iteration for \eqref{eq2.3:energy} can obtain 
\begin{equation}
	\left( B\otimes I_x+\nabla F\left( \boldsymbol{u}^k \right) \right) \boldsymbol{u}^{k+1}=\boldsymbol{b}+\left( \nabla F\left( \boldsymbol{u}^k \right) \boldsymbol{u}^k-F\left( \boldsymbol{u}^k \right) \right),
	\label{eq2.4:energy}
\end{equation}
where $k\geqslant 0$ is the iteration index and $\nabla F\left( \boldsymbol{u}^k \right)$ =blk$\mathrm{diag}\left( \nabla f\left( {u_1}^k \right) ,\cdots ,\nabla f\left( {u_n}^k \right) \right)$ consists of the Jacobian matrix $\nabla f\left( {u_j}^k \right)$ as the $j$th block. In order to keep the diagonalization technique still available, we must replace or approximate all the blocks $\nabla f\left( {u_j}^k \right)$ with a single matrix $A_k$. Drawing on the ideas in \cite{2017Time}, we consider the following average Jacobian matrix 
\begin{equation*}
	A_k:=\frac{1}{n}\sum_{j=1}^n{\nabla f\left( {u_j}^k \right)}.
\end{equation*}
Therefore, a simple Kronecker product is presented to approximate $\nabla F\left( \boldsymbol{u}^k \right) $, that is, $\nabla F\left( \boldsymbol{u}^k \right) \approx I_t\otimes A_k$.
By substituting this into \eqref{eq2.4:energy}, we arrive at a simplified quasi-Newton iteration (SNI):
\begin{equation}
	\left( B\otimes I_x+I_t\otimes A_k \right) \boldsymbol{u}^{k+1}=\boldsymbol{b}+\left[ \left( I_t\otimes A_k \right) \boldsymbol{u}^k-F\left( \boldsymbol{u}^k \right) \right].
	\label{eq2.5:energy} 
\end{equation}
The convergence analysis of SNI can be found in \cite{PeterDeuflhard2006Newton,1970Iterative}.

Using the same structure of Jacobian system \eqref{eq2.5:energy} in SNI can also be solved in parallel. If $B$ is diagonalized into $B=VDV^{-1}$, then $\boldsymbol{u}^{k+1}$ in \eqref{eq2.5:energy} can be solved as 
\begin{equation}
	\begin{cases}  
		\boldsymbol{\tilde{g}} = (V^{-1} \otimes I_x) \boldsymbol{r}^k,   & \text{step-(a)}; \\  
		(\lambda_j I_x + A_k) z_j = \tilde{g}_j, \quad j = 1, 2, \ldots, n, &\text{step-(b)};\\  
		\boldsymbol{u}^{k+1} = (V \otimes I_x) \boldsymbol{z}, & \text{step-(c)};  
	\end{cases}
	\label{eq2.6:energy}
\end{equation}
where $\boldsymbol{r}^k=\boldsymbol{b}+\left( \left( I_t\otimes A_k \right) -F\left( \boldsymbol{u}^k \right) \right)$. In the linear case, $f\left( u(t) \right) =Au(t)$, we have $A_k=A$ and $\boldsymbol{r}^k=\boldsymbol{b}$. Then, \eqref{eq2.6:energy} reduces to \eqref{eq1.2:energy}. In \autoref{sec3}, we theoretically show that the matrix $B$ is indeed diagonalizable, and the expression of its decomposition is provided.

\subsection{Second-order problems}\label{sec2.2}
We consider the following second-order differential equation 
\begin{equation}
	u'' \left( t \right) +a_1u' \left( t \right) +f\left( u\left( t \right) \right) =0, \quad u\left( 0 \right) =u_0, u' \left( 0 \right) =\tilde{u}_0, t\in \left( 0,T \right],
	\label{eq2.7:energy}
\end{equation}
where $a_1\in \mathbb{R}$ or represent the matrix arising from some PDEs after a spatial discretization.
For discretization, we let $v\left( t \right) =u' \left( t \right)$ and make an order-reduction by rewriting \eqref{eq2.7:energy} as
\begin{equation}
	w' \left( t \right) :=\left[ \begin{array}{c}
		u\left( t \right)\\
		v\left( t \right)\\
	\end{array} \right] ^{\prime}=\left[ \begin{array}{c}
		v\left( t \right)\\
		-a_1v-f\left( u\left( t \right) \right)\\
	\end{array} \right] =:g\left( w \right) , \quad w\left( 0 \right) :=\left[ \begin{array}{c}
		u\left( 0 \right)\\
		v\left( 0 \right)\\
	\end{array} \right] =\left[ \begin{array}{c}
		u_0\\
		\tilde{u}_0\\
	\end{array} \right].
	\label{eq2.8:energy}
\end{equation}
In addition, by using the same time discretization scheme as in \autoref{sec2.1}, we have 
\begin{equation}
	\begin{cases}
		\frac{w_{j+1}-w_{j-1}}{2\varDelta t}+g\left( w_i \right) =0, \quad j=1,2,\cdots ,n-1,\\
		\frac{\frac{3}{2}w_n-2w_{n-1}+\frac{1}{2}w_{n-2}}{\varDelta t}+g\left( w_n \right) =0.
	\end{cases}
	\label{eq2.9:energy}
\end{equation}

Obviously, for \eqref{eq2.9:energy}, the form of the all-at-once system is the same as that of \eqref{eq2.3:energy}, so the diagonalization process \eqref{eq2.6:energy} can be directly applied. We eliminate the auxiliary variable $\boldsymbol{v}=\left( {v}_1^\top,\cdots ,{v}_n^\top \right) ^\top$, and  introduce the all-at-once system for  $\boldsymbol{u}$ in the next lemma. Thus, the equivalent system of \eqref{eq2.7:energy} reduces the storage requirement.
\begin{lemma}
	The numerical solution $\boldsymbol{u}=\left( {u}_1^\top,\cdots ,{u}_n^\top \right) ^\top$ of \eqref{eq2.9:energy} satisfies 
	\begin{equation}
		\left( B^2\otimes I_x+a_1 B\otimes I_x  \right) \boldsymbol{u}+F\left( \boldsymbol{u} \right) =\boldsymbol{b},
		\label{eq2.10:energy}
	\end{equation}
	where B is defined in \eqref{eq1.4:energy} and $\boldsymbol{b}=\left( \frac{a_1{u}_0^\top}{2\varDelta t}+\frac{{\tilde{u}_0}^\top}{2\varDelta t}, -\frac{{u}_0^\top}{4\varDelta t^2},0,\cdots ,0 \right) ^\top$.
	\begin{proof}
		Notice $w_j=\left( {u}_j^\top,{v}_j^\top \right) ^\top$, from \eqref{eq2.9:energy} we can represent $\left\{ u_j \right\}$ and $\left\{ v_j \right\}$ separately as 
		\begin{equation*}  
			\begin{cases}
				\frac{u_{j+1}-u_{j-1}}{2\varDelta t}-v_j=0 , \quad j=1,2,\cdots ,n-1,\\
				\frac{3u_n-4u_{n-1}+u_{n-2}}{2\varDelta t}-v_n=0,
			\end{cases}
		\end{equation*}
		and
		\begin{equation*}  
			\hspace{4em} \begin{cases}
				\frac{v_{j+1}-v_{j-1}}{2\varDelta t}+a_1v_j+f\left( u_j \right) =0, \quad j=1,2,\cdots ,n-1,\\
				\frac{3v_n-4v_{n-1}+v_{n-2}}{2\varDelta t}+a_1v_n+f\left( u_n \right) =0.
			\end{cases}
		\end{equation*}
		With the matrix $B$ given in \eqref{eq1.4:energy}, we have 
		\begin{equation}
			\left( B\otimes I_x \right) \boldsymbol{u}-\boldsymbol{v}=\boldsymbol{b}_1, \quad \left( B\otimes I_x \right) \boldsymbol{v}+a_1\boldsymbol{v}+F\left( \boldsymbol{u} \right) =\boldsymbol{b}_2,
			\label{eq2.11:energy}
		\end{equation}
		where $\boldsymbol{v}=\left( {v}_1^\top,\cdots ,{v}_n^\top \right)^\top$, $\boldsymbol{b}_1=\left( \frac{{u}_0^\top}{2\varDelta t},0,\cdots ,0 \right)^\top$ and $\boldsymbol{b}_2=\left( \frac{\tilde{u}_0^\top}{2\varDelta t},0,\cdots , 0 \right)^\top $. 
		Substituting the first equation of \eqref{eq2.11:energy} into the second one, gives $\left( B^2\otimes I_x+a_1B\otimes I_x \right) \boldsymbol{u}+F\left( \boldsymbol{u} \right) =\left( B\otimes I_x+a_1I \right) \boldsymbol{b}_1+\boldsymbol{b}_2$. After some simple calculations, \eqref{eq2.10:energy} can be obtained.
	\end{proof} 
\end{lemma}
\subsection{Third-order problems}\label{sec2.3}
In this part, we consider third-order differential equations:  
\begin{equation}
	u'''\left( t \right) +a_1u''\left( t \right) +a_2u'\left( t \right) +f\left( u\left( t \right) \right) =0, \quad u\left( 0 \right) =u_0, u' \left( 0 \right) =\tilde{u}_0, u'' \left( 0 \right) =\bar{u}_0, t\in \left( 0,T \right],
	\label{eq2.12:energy}
\end{equation} 
where $a_2 \in \mathbb{R}$ or represent the matrix arising from some PDEs after a spatial discretization.
Similar to the second-order case, let $v\left( t \right) =u' \left( t \right)$, we can reduce the order of \eqref{eq2.12:energy} as  
\begin{equation}
	w\left( t \right) =\left[ \begin{array}{c}
		u\left( t \right)\\
		v\left( t \right)\\
	\end{array} \right],
	\\
	P\left( w \right)=\left[ \begin{array}{c}
		-\bar{u}_0\left( 0 \right) -a_1\tilde{u}_0\left( 0 \right)\\
		a_2v\left( t \right) +f\left( u(t) \right)\\
	\end{array} \right], \quad
	w_0\left( 0 \right) =\left[ \begin{array}{c}
		u_0\\
		\tilde{u}_0\\
	\end{array} \right] =, \tilde{w}_0\left( 0 \right) =\left[ \begin{array}{c}
		\tilde{u}_0\\
		\bar{u}_0\\
	\end{array} \right].
	\label{eq2.13:energy}
\end{equation}
\begin{lemma}
	By using the same discretization as the second-order form, the vector $\boldsymbol{u}=\left( {u}_1^\top,\cdots ,{u}_n^\top \right)^\top$ satisfies 
	\begin{equation}
		\mathcal{A}\left( \mathcal{A}+a_2I \right) \boldsymbol{u}+\mathcal{A}\left( B^{-1}\otimes I_x \right) F\left( \boldsymbol{u} \right) =\left( \mathcal{A}+a_2I \right) \boldsymbol{b}_1+\mathcal{A}\left( B^{-1}\otimes I_x \right) \boldsymbol{b}_2,
		\label{eq2.14:energy}
	\end{equation}
	where $I=I_t\otimes I_x$, $\boldsymbol{b}_1=\left( \frac{a_1{u}_0^\top}{2\varDelta t}, -\frac{{u}_0^\top}{4\varDelta t^2}, 0,\cdots ,0 \right) ^\top$, $\boldsymbol{b}_2=\left( \frac{a_1{\tilde{u}_0}^\top+{\bar{u}_0}^\top}{2\varDelta t}, -\frac{{\tilde{u}_0}^\top}{4\varDelta t^2},0,\cdots ,0 \right) ^\top$ and $	\mathcal{A}=B^2\otimes I_x+a_1B\otimes I_x$.
\end{lemma}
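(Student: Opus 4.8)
The plan is to mirror the order-reduction used for the second-order case and then recycle the already-proven second-order Lemma (equation \eqref{eq2.10:energy}). Setting $v=u'$, I would rewrite \eqref{eq2.12:energy} as the coupled system consisting of the first-order relation $u'=v$ together with the second-order equation
\begin{equation*}
v''+a_1v'+\bigl(a_2 v+F(u)\bigr)=0,\qquad v(0)=\tilde u_0,\ v'(0)=\bar u_0,
\end{equation*}
which holds simply because $v=u'$, $v'=u''$, $v''=u'''$. The point is that the $v$-equation has exactly the structure of \eqref{eq2.7:energy}, with the role of the nonlinearity played by $a_2 v+F(u)$ and the initial data shifted up by one derivative.

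First I would apply the second-order Lemma verbatim to the $v$-equation. The derivation of \eqref{eq2.10:energy} never uses that the nonlinear term is a function of the unknown; it only carries the all-at-once vector of the forcing along through the substitution. Hence, replacing the nonlinear term by $a_2\boldsymbol v+F(\boldsymbol u)$ and the initial data $(u_0,\tilde u_0)$ by $(\tilde u_0,\bar u_0)$ yields
\begin{equation*}
\bigl(\mathcal A+a_2 I\bigr)\boldsymbol v+F(\boldsymbol u)=\boldsymbol b_2,
\end{equation*}
where the initial-value vector emitted by the Lemma is precisely the stated $\boldsymbol b_2$. This is the first key identity.

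Next I would discretize the relation $u'=v$ with the same scheme \eqref{eq2.2:energy}, which gives $(B\otimes I_x)\boldsymbol u-\boldsymbol v=\boldsymbol b_{u_0}$ with $\boldsymbol b_{u_0}=(u_0^T/(2\varDelta t),0,\dots,0)^T$, exactly as in the second-order proof. The crucial algebraic observation is that
\begin{equation*}
\mathcal A\,(B^{-1}\otimes I_x)=\bigl((B^2+a_1B)B^{-1}\bigr)\otimes I_x=(B+a_1 I_t)\otimes I_x=B\otimes I_x+a_1 I,
\end{equation*}
so this operator is in fact a polynomial in $B$ and no genuine inversion is needed. Applying it to $\boldsymbol v=(B\otimes I_x)\boldsymbol u-\boldsymbol b_{u_0}$ and simplifying the boundary contribution, I would establish the second key identity
\begin{equation*}
\mathcal A\,(B^{-1}\otimes I_x)\boldsymbol v=\mathcal A\boldsymbol u-\boldsymbol b_1 .
\end{equation*}

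Finally I would multiply the first identity on the left by $\mathcal A(B^{-1}\otimes I_x)$, use that $\mathcal A$, $(B^{-1}\otimes I_x)$ and $(\mathcal A+a_2 I)$ all commute (each is of the form $p(B)\otimes I_x$), and substitute the second identity to eliminate $\boldsymbol v$; rearranging then produces \eqref{eq2.14:energy}. I expect the only delicate step to be the boundary bookkeeping in the second identity: one must verify that $[(B\otimes I_x)+a_1 I]\boldsymbol b_{u_0}$ is exactly $\boldsymbol b_1=(a_1u_0^T/(2\varDelta t),\,-u_0^T/(4\varDelta t^2),0,\dots,0)^T$, where the second, $\mathcal O(\varDelta t^{-2})$, entry arises from the subdiagonal entry $-\tfrac12$ of $B$ acting on the first block of $\boldsymbol b_{u_0}$. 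Checking that this $\boldsymbol b_1$, together with the $\boldsymbol b_2$ returned by the second-order Lemma, reproduces the right-hand side of \eqref{eq2.14:energy} is the main obstacle; everything else is formal manipulation of commuting Kronecker factors.
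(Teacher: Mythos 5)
Your proposal is correct, and it rests on the same pillars as the paper's own proof --- order reduction via $v=u'$, an appeal to the second-order lemma \eqref{eq2.10:energy}, and elimination of $\boldsymbol{v}$ through commutativity of matrices of the form $p(B)\otimes I_x$ --- but the assembly is organized differently and, frankly, more cleanly. The paper packages \eqref{eq2.12:energy} as a second-order system \eqref{eq2.15:energy} in the doubled variable $w=(u^T,v^T)^T$, whose source term $\boldsymbol{P}$ in \eqref{eq2.13:energy} itself contains $-v'-a_1v$; it applies the second-order result to that $2m$-dimensional system, splits it into the two block equations \eqref{eq2.18:energy} via the vectorization identity \eqref{eq2.17:energy}, and finally removes $\boldsymbol{v}'$ using the discrete relation $\boldsymbol{v}'=(B\otimes I_x)\boldsymbol{v}-\left( \tilde{u}_0^T/(2\varDelta t),0,\cdots ,0 \right)^T$. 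You instead apply the second-order lemma only to the $m$-dimensional equation $v''+a_1v'+(a_2v+F(u))=0$, which directly yields the paper's second block equation $\left( \mathcal{A}+a_2I \right) \boldsymbol{v}+F(\boldsymbol{u})=\boldsymbol{b}_2$ (your observation that the derivation of \eqref{eq2.10:energy} carries the nonlinear term along as a passenger, so $a_2v+F(u)$ is admissible, is accurate), and you obtain the coupling by discretizing $u'=v$ with \eqref{eq2.2:energy} --- which is exactly the relation appearing as the first equation of \eqref{eq2.11:energy} inside the second-order proof. Your boundary bookkeeping is right: $\left( B\otimes I_x+a_1I \right) \boldsymbol{b}_{u_0}=\boldsymbol{b}_1$, the entry $-u_0/(4\varDelta t^2)$ coming from the subdiagonal entry $-1/(2\varDelta t)$ of $B$, and applying $\mathcal{A}\left( B^{-1}\otimes I_x \right) =(B+a_1I_t)\otimes I_x$ to your coupling relation reproduces the paper's first equation of \eqref{eq2.18:energy} after its $\boldsymbol{v}'$ substitution, so both routes meet at the same pair of identities and the same final elimination. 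What your organization buys: no doubled spatial dimension, no source term containing a derivative of the unknown, no vectorization step, and the explicit remark that $\mathcal{A}\left( B^{-1}\otimes I_x \right)$ is a polynomial in $B$, so the $B^{-1}$ in \eqref{eq2.14:energy} is notational rather than computational. What the paper's packaging buys is literal fidelity to the hypothesis ``the same discretization as the second-order form''; to close that small gap you need only one sentence noting that your first-order discretization of $u'=v$ is precisely the $u$--$v$ relation that discretization induces, as verified above.
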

\begin{proof}
	From \eqref{eq2.13:energy} and \eqref{eq2.12:energy}, we can obtain
	\begin{equation}
		\boldsymbol{w}'' +a_1\boldsymbol{w}' +P(\boldsymbol{w})=\boldsymbol{0},
		\label{eq2.15:energy}
	\end{equation}
	where $\boldsymbol{0}$ is a zero vector with suitable size, $\boldsymbol{w}=\left( {w}_1^\top,\cdots ,{w}_n^\top \right)^\top$ and $P(\boldsymbol{w})=\left( P(w_1)^\top,\cdots ,P(w_n)^\top \right)^\top$. It is a second-order form, and \eqref{eq2.15:energy} satisfies 
	\begin{equation}
		\left( B^2\otimes I_x+a_1B\otimes I_x \right) \boldsymbol{w}+P(\boldsymbol{w})=\boldsymbol{b},
		\label{eq2.16:energy}
	\end{equation}
	by matrix vectorization $vec(\cdot)$, we can obtain 
	\begin{equation}
		\left( B^2\otimes I_x+a_1B\otimes I_x \right) \boldsymbol{w}=vec\left( \hat{\boldsymbol{w}}\left( B^2+a_1B \right) ^\top \right),
		\label{eq2.17:energy}
	\end{equation}
	where  $\hat{\boldsymbol{w}}=\left( w_1\,\,w_2\,\,\cdots \,\,w_n \right) =\left( \begin{matrix}
		u_1&		u_2&		\cdots&		u_n\\
		v_1&		v_2&		\cdots&		v_n\\
	\end{matrix} \right) =\left( \begin{array}{c}
		\hat{\boldsymbol{u}}\\
		\hat{\boldsymbol{v}}\\
	\end{array} \right)  $, $vec\left( \hat{\boldsymbol{u}} \right) =\boldsymbol{u}$, $vec\left( \hat{\boldsymbol{v}} \right) =\boldsymbol{v}$, $\left( B^2+a_1B \right) ^\top=\left( d_1\,\,d_2\,\,\cdots \,\,d_n \right)$, and 
	\begin{equation*}
		vec\left( \hat{\boldsymbol{u}} \begin{pmatrix}
			d_1,&		\cdots&		d_n\\
		\end{pmatrix} \right) =\left( \begin{array}{c}
			\hat{\boldsymbol{u}}d_1\\
			\vdots\\
			\hat{\boldsymbol{u}}d_n\\
		\end{array} \right) , \quad vec\left( \hat{\boldsymbol{v}} \begin{pmatrix}
			d_1,&		\cdots&		d_n\\
		\end{pmatrix} \right) =\left( \begin{array}{c}
			\hat{\boldsymbol{v}}d_1\\
			\vdots\\
			\hat{\boldsymbol{v}}d_n\\
		\end{array} \right).
	\end{equation*}
	Substituting \eqref{eq2.17:energy} into \eqref{eq2.16:energy} gives 
	\begin{equation*}
		\begin{cases}
			\hat{\boldsymbol{u}}d_1-v' _1-a_1v_1=\frac{a_1u_0+\tilde{u}_0}{2\varDelta t},\\
			\hat{\boldsymbol{u}}d_2-v' _2-a_1v_2=-\frac{u_0}{4\varDelta t^2},\\
			\vdots\\
			\hat{\boldsymbol{u}}d_n-v' _n-a_1v_n=0,
		\end{cases}
	\end{equation*}
	and
	\begin{equation*}
		\begin{cases}
			\hat{\boldsymbol{v}}d_1+a_2v_1+f_1=\frac{a_1\tilde{u}_0+\bar{u}_0}{2\varDelta t},\\
			\hat{\boldsymbol{v}}d_2+a_2v_2+f_2=-\frac{\tilde{u}_0}{4\varDelta t^2},\\
			\vdots\\
			\hat{\boldsymbol{v}}d_n+a_2v_n+f_n=0.
		\end{cases}
	\end{equation*}
	Thus, with the matrix $B$ given by \eqref{eq1.4:energy} we have 
	\begin{equation}
		\left( B^2\otimes I_x+a_1B\otimes I_x \right) \boldsymbol{u}-\boldsymbol{v}' -a_1\boldsymbol{v}=\boldsymbol{b}_1, \quad \left( B^2\otimes I_x+a_1B\otimes I_x \right) \boldsymbol{v}+a_2\boldsymbol{v}+F\left( \boldsymbol{u} \right) =\boldsymbol{b}_2,
		\label{eq2.18:energy}
	\end{equation}
	where $\boldsymbol{v}=\left( {v}_1^\top,\cdots ,{v}_n^\top \right) ^\top, \boldsymbol{b}_1=\left( \frac{a_1{u}_0^\top+{\tilde{u}_0}^\top}{2\varDelta t}, -\frac{{u}_0^\top}{4\varDelta t^2}, 0\cdots ,0 \right) ^\top$, $\boldsymbol{b}_2=\left( \frac{a_1{\tilde{u}_0}^\top+{\bar{u}_0}^\top}{2\varDelta t}, -\frac{{\tilde{u}_0}^\top}{4\varDelta t^2}, 0,\cdots ,0 \right) ^\top$. Note that the dimension of the spatial identity matrix here should vary with the spatial dimensions of $\boldsymbol{u}$ and $\boldsymbol{w}$, that is, the $I_x$ corresponding to $\boldsymbol{u}$ is of $m$ dimensions, and the $I_x$ corresponding to $\boldsymbol{w}$ is of $2m$ dimensions. Substituting  
	\begin{equation*}
		\boldsymbol{v}' =\left( B\otimes I_x \right) \boldsymbol{v}-\left( \begin{array}{c}
			\frac{{v}_0^\top}{2\varDelta t}\\[0.5em]
			0\\
			\vdots\\
			0\\
		\end{array} \right),
	\end{equation*}
	into \eqref{eq2.18:energy} and eliminating $ \boldsymbol{v}$ yields  
	\begin{equation*}
		\mathcal{A}\left( \mathcal{A}+a_2I \right) \boldsymbol{u}+\mathcal{A}\left( B^{-1}\otimes I_x \right) F\left( \boldsymbol{u} \right) =\left( \mathcal{A}+a_2I \right) \boldsymbol{b}_1+\mathcal{A}\left( B^{-1}\otimes I_x \right) \boldsymbol{b}_2,
	\end{equation*}
	where $\mathcal{A}=B^2\otimes I_x+a_1B\otimes I_x$.
\end{proof}

If $f\left( u(t) \right) =Au(t)$, we have $F\left( \boldsymbol{u} \right) =\left( f^\top\left( u_1 \right) ,\cdots ,f^\top\left( u_n \right) \right) ^\top=\left( I_t\otimes A \right) \boldsymbol{u}$. Substituting it into  the all-at-once system \eqref{eq2.10:energy} and \eqref{eq2.14:energy}, and since  $B$  is diagonalizable, it is obvious that based on this connection, the previous direct PinT algorithm \eqref{eq2.6:energy} can be extended to \eqref{eq2.11:energy} and \eqref{eq2.18:energy}.
%\section{Discussion of \texorpdfstring{{\boldmath$Z=X \cup Y$}}{Z = X union Y}}
\section{The spectral decomposition of $B$}
\label{sec3}
In this section, we theoretically prove that the time-discrete matrix $B$ is diagonalizable and provide the explicit form of its spectral decomposition. Based on the analytical diagonalization, we estimate the condition number of the eigenvector matrix $V$ in the 2-norm, denoted as $Cond_2\left( V \right) =\mathcal{O} \left( n^3 \right) $. In addition, in the proof of the theorem, we used the explicit formula for the inverse tridiagonal Toeplitz matrix from \cite{2004On,1996Matrix}, and the recurrence relation from \cite{2009Introductory}.

To simplify the notation, we consider rescaling the diagonalization of matrix $B=\frac{1}{\varDelta t}\mathcal{B}$. Clearly, by diagonalizing $\mathcal{B} =V\varSigma{}V^{-1}$, we can obtain 
\begin{equation*}
	B=\frac{1}{\Delta t}\mathcal{B}=V(\frac{1}{\Delta t}\varSigma_{})V^{-1}=VDV^{-1}.
\end{equation*}
Define two functions 
\begin{equation*}
	T_n\left( x \right) =\cos \left( n\mathrm{arc}\cos x \right) , \quad U_n\left( x \right) =\sin \left[ \left( n+1 \right) \mathrm{arc}\cos x \right] /\sin \left( \mathrm{arc}\cos x \right) ,
\end{equation*}
which are the Chebyshev polynomials of the first and second kind, respectively. In the following theorem, we characterize the eigenvalues and eigenvectors of the time-discrete matrix $\mathcal{B}$ using Chebyshev polynomial representations. The imaginary unit is consistently denoted by $\mathrm{\mathbf{i}} = \sqrt{-1}$.

\begin{theorem}\label{thm3.1:pythagoras}
	The eigenvalues of $\mathcal{B}$ are $\lambda _j=\mathrm{\mathbf{i}}x_j$, with $\left\{ x_j \right\} _{j=1}^{n}$ being the $n$ roots of 
	\begin{equation}
		U_{n-1}\left( x \right) +\mathrm{\mathbf{i}}U_{n-2}\left( x \right) -\mathrm{\mathbf{i}}T_n\left( x \right) +T_{n-1}\left( x \right)=0.
		\label{eq3.1:energy}
	\end{equation}
	For each $\lambda _j$, the corresponding eigenvector is $P_j=\left[ p_{j,0},\cdots ,p_{j,n-1} \right] ^\top$, and 
	\begin{equation}
		p_{j,k}=\mathrm{\mathbf{i}}^kU_k\left( x_j \right) , \quad k=0,\cdots ,n-1,
		\label{eq3.2:energy}
	\end{equation}
	where $p_{j,0}=1$ is assumed for normalization.
	\begin{proof}
		Let $\lambda \in \mathbb{C}$ be an eigenvalue of matrix $\mathcal{B}$, and $P=\left[ p_0,p_1,\cdots ,p_{n-1} \right] ^\top\ne 0$ be the eigenvector corresponding to  $\lambda$. Thus, from $\mathcal{B} P=\lambda P$, we have 
		\begin{equation*}
			\begin{cases}
				\lambda p_0=\frac{p_1}{2},\\
				\lambda p_1=-\frac{p_0}{2}+\frac{p_2}{2},\\
				\vdots\\
				\lambda p_{n-2}=-\frac{p_{n-3}}{2}+\frac{p_{n-1}}{2},\\
				\lambda p_{n-1}=\frac{3}{2}p_{n-1}-2p_{n-2}+\frac{1}{2}p_{n-3}.\\
			\end{cases}
			\label{eq3.3:energy}
		\end{equation*}

		Clearly, $p_0\ne 0$, otherwise, $p_1=\cdots =p_{n-1}=0$. Without loss of generality, we normalize $p_0$, that is, assume $p_0=1$. Thus, we obtain $p_1=2\lambda$   and recursively derive 
		\begin{equation}
			2\lambda p_{k-1}=p_k-p_{k-2},
			\label{eq3.4:energy}
		\end{equation}
		holds for $k=2,\cdots ,n-1$. The last equation gives
		\begin{equation}
			\left( 2\lambda -3 \right) p_{n-1}=p_{n-3}-4p_{n-2}.
			\label{eq3.5:energy}
		\end{equation}

		Let $\lambda =\frac{1}{2}\left( y-\frac{1}{y} \right) =\mathrm{\mathbf{i}}\cos \theta$ with $y=\mathrm{\mathbf{i}}e^{\mathrm{\mathbf{i}}\theta}$. The general solution form of the recursive \eqref{eq3.4:energy} is
		\begin{equation}
			p_k=c_1y^k+c_2\left( -y \right) ^{-k}.
			\label{eq3.6:energy}
		\end{equation}
		From the initial conditions $p_0=1$  and  $p_1=2\lambda =y-y^{-1}$, it can be obtained that
		\begin{equation*}
			c_1+c_2=1, \quad c_1y-c_2y^{-1}=y-y^{-1},
		\end{equation*}
		which gives $c_1=\frac{y}{y+y^{-1}}$ and $c_2=\frac{y^{-1}}{y+y^{-1}}$. In addition, by substituting $y=\mathrm{\mathbf{i}}e^{\mathrm{\mathbf{i}}\theta}$, we get  
		\begin{equation}
			p_k=\frac{y^{k+1}+\left( -1 \right) ^ky^{-\left( k+1 \right)}}{y+y^{-1}}=\frac{\mathrm{\mathbf{i}}^k\sin \left[ \left( k+1 \right) \theta \right]}{\sin \theta}, \quad k=0,\cdots ,n-1.
			\label{eq3.7:energy}
		\end{equation}
		
		In view of $\lambda =\mathrm{\mathbf{i}}\cos \theta$, we rewrite \eqref{eq3.5:energy} as 
		\begin{equation*}
			\frac{\mathrm{\mathbf{i}}^{n-3}\sin \left[ \left( n-2 \right) \theta \right]}{\sin \theta}-4\frac{\mathrm{\mathbf{i}}^{n-2}\sin \left[ \left( n-1 \right) \theta \right]}{\sin \theta}=\left( 2\mathrm{\mathbf{i}}\cos \theta -3 \right) \frac{\mathrm{\mathbf{i}}^{n-1}\sin \left( n\theta \right)}{\sin \theta},
		\end{equation*}
		which is equivalent to 
		\begin{equation}
			\frac{\sin \left( n\theta \right)}{\sin \theta}+\mathrm{\mathbf{i}}\frac{\sin \left[ \left( n-1 \right) \theta \right]}{\sin \theta}=\mathrm{\mathbf{i}}\cos \left( n\theta \right) -\cos \left[ \left( n-1 \right) \theta \right] .
			\label{eq3.8:energy}
		\end{equation}

		This is a polynomial equation in $\lambda =\mathrm{\mathbf{i}}\cos \theta $ and has a dimension of $n$ due to 
		\begin{equation*}
			\frac{\sin \left( n\theta \right)}{\sin \theta}=U_{n-1}\left( -\mathrm{\mathbf{i}}\lambda \right), \quad \frac{\sin \left[ \left( n-1 \right) \theta \right]}{\sin \theta}=U_{n-2}\left( -\mathrm{\mathbf{i}}\lambda \right),
		\end{equation*}
		and
		\begin{equation*}
			\cos \left( n\theta \right) =T_n\left( -\mathrm{\mathbf{i}}\lambda \right), \quad \cos \left[ \left( n-1 \right) \theta \right] =T_{n-1}\left( -\mathrm{\mathbf{i}}\lambda \right),
		\end{equation*}
		which are polynomials of $\lambda$ with degrees $n-1$ and $n-2$ and $n$ and $n-1$, respectively. From \eqref{eq3.7:energy} and \eqref{eq3.8:energy}, the desired conclusion can be obtained.
	\end{proof}
\end{theorem}

Denote $\lambda =\mathrm{\mathbf{i}}x$ with $x=\cos \theta $. From \eqref{eq3.7:energy} and \eqref{eq3.8:energy}, we obtain 
\begin{equation}
	p_k=\mathrm{\mathbf{i}}^kU_k\left( x \right) , \quad k=0,1,\cdots ,n-1,
	\label{eq3.9:energy}
\end{equation}
and
\begin{equation}
	U_{n-1}\left( x \right) +\mathrm{\mathbf{i}}U_{n-2}\left( x \right)+T_{n-1}\left( x \right)-\mathrm{\mathbf{i}}T_n\left( x \right)=0.
	\label{eq3.10:energy}
\end{equation}
The $n$ roots $\{x_j\}_{j=1}^n$ of \eqref{eq3.10:energy} give the $n$ eigenvalues $\lambda _j=\mathrm{\mathbf{i}}x_j$ of the matrix $\mathcal{B}$. Eq.~\eqref{eq3.9:energy} gives the eigenvectors corresponding to each eigenvalue.
\begin{lemma}\label{lem3.1:example}
	There exist no common roots between the Chebyshev polynomials of the first kind $T_n(x)$ and those of the second kind $U_{n-1}(x)$.
\end{lemma}
\begin{proof}
	The roots of $T_n(x)$ and $U_{n-1}(x$) correspond to the $\cos \left( \frac{\left( 2k-1 \right) \pi}{2n} \right)$ ($k=1,2,\cdots ,n$) and $\cos \left( \frac{m\pi}{n} \right)$ ($m=1,2,\cdots ,n-1$), respectively. If $T_n(x)$ and $U_{n-1}(x$) have a common root, 
	then there exist $k_j$ and $m_i$ such that $\cos \left( \frac{\left( 2k_j-1 \right) \pi}{2n} \right)=\cos \left( \frac{m_i\pi}{n} \right)$, the implies $\frac{\left( 2k_j-1 \right) \pi}{2n}=\frac{m_i\pi}{n}$, we obtain $2k_j-1=2m_i$. Since the left-hand side is an odd integer while the right-hand side is an even integer, equality cannot hold. This contradiction implies that $T_n(x)$ and $U_{n-1}(x$) cannot any common roots.
\end{proof}

Based on \autoref{thm3.1:pythagoras} and \cref{lem3.1:example}, we further prove that the matrix  $\mathcal{B}$  has  $n$  distinct eigenvalues, that is,  $\mathcal{B}$  is diagonalizable and invertible. Subsequently, $B$ is nonsingular and diagonalizable.
\begin{theorem}\label{thm3.2:pythagoras}
	$U_{n-1}\left( x \right) +\mathrm{\mathbf{i}}U_{n-2}\left( x \right)+T_{n-1}\left( x \right)-\mathrm{\mathbf{i}}T_n\left( x \right)=0$ has n distinct roots, and if x is a root, then so is $-\bar{x}$.
	\begin{proof}
		For the convenience of the proof, we consider the  matrix $\mathcal{B}$ of order $n+1$ here. We first prove that the matrix  $\mathcal{B}$  has  $n+1$  distinct eigenvalues. Obviously, Eq.~\eqref{eq3.1:energy} has no real root (that is, $\lambda _j$ is not a pure imaginary number). Let the characteristic polynomial of matrix  $\mathcal{B}$  be  
		\begin{equation}
			f\left( \lambda \right) =\left| \lambda E-\mathcal{B} \right|=\left| \begin{matrix}
				D&		\alpha\\
				\beta&		a\\
			\end{matrix} \right|,
			\label{eq3.11:energy}
		\end{equation}
		where
		\begin{equation*}
			D=\left( \begin{matrix}
				\lambda&		-\frac{1}{2}&		&		\\
				\frac{1}{2}&		\lambda&		\ddots&		\\
				&		\ddots&		\ddots&		-\frac{1}{2}\\
				&		&		\frac{1}{2}&		\lambda\\
			\end{matrix} \right)\in \mathbb{R}^{n \times n}, \quad \alpha =\left( \begin{matrix}
				0,& \cdots,& 0,& -\frac{1}{2}\\
			\end{matrix} \right) ^\top\in \mathbb{R}^{n \times 1},
		\end{equation*} 
		and
		\begin{equation*}
			\beta =\left( \begin{matrix}
				0,&	\cdots,&	0,&	-\frac{1}{2},&  2\\
			\end{matrix} \right)\in \mathbb{R}^{1 \times n}, \quad a=\lambda -\frac{3}{2}.\hspace{10em}
		\end{equation*} 
		
		Notice that the matrix $D$ is invertible when $\lambda$ are not purely imaginary. We Define $$P=\left( \begin{matrix}
			I&		0\\
			-\beta D^{-1}&		1\\
		\end{matrix} \right) , \quad Q=\left( \begin{matrix}
			I&		-D^{-1}\alpha\\
			0&		1\\
		\end{matrix} \right) ,$$
		where $I\in \mathbb{R}^{n\times n}$ is the identity matrix. Thus, from 
		\begin{equation*}
			P\left( \begin{matrix}
				D&		\alpha\\
				\beta&		a\\
			\end{matrix} \right) Q=\left( \begin{matrix}
				D&		0\\
				0&		a-\beta D^{-1}\alpha\\
			\end{matrix} \right),
		\end{equation*}
		we arrive at 
		\begin{equation}
			\left| \begin{matrix}
				D&		\alpha\\
				\beta&		a\\
			\end{matrix} \right|=\left| \begin{matrix}
				D&		0\\
				0&		a-\beta D^{-1}\alpha\\
			\end{matrix} \right|=\left| D \right|\left| a-\beta D^{-1}\alpha \right|.
			\\
			\label{eq3.12:energy}
		\end{equation}

		Since $\lambda_j$ is not a pure imaginary number, \eqref{eq3.12:energy} holds if $\lambda=\lambda_j$. So $f\left( \lambda \right) =0$ is equivalent to 
		\begin{equation}
			a-\beta D^{-1}\alpha =0.
			\label{eq3.13:energy}
		\end{equation}

		Let $D^{-1}=\left( d_{ij} \right) (i,j=1,2,\cdots ,n)$, then $\beta D^{-1}\alpha =\frac{1}{4}d_{n-1,n}-d_{nn}$. Since the explicit formula for the elements of the inverse matrix of a Toeplitz tridiagonal matrix is 
		\begin{equation*}
			d_{ij}=\left( -1 \right) ^{i+j}\left( \frac{c}{b} \right) ^{\frac{j-i}{2}}\frac{U_{i-1}\left( \theta \right) U_{n-j}\left( \theta \right)}{\sqrt{bc}U_n\left( \theta \right)}, \quad   i\leqslant j, \theta =\frac{a}{2\sqrt{bc}}.
		\end{equation*}
		Here, $a$, $b$ and $c$ correspond to the elements of the main diagonal, upper diagonal, and subdiagonal respectively. It follows that 
		\begin{equation}
			d_{n-1,n}=-2\frac{U_{n-2}\left( \theta \right)}{U_n\left( \theta \right)}, \quad d_{nn}=-2i\frac{U_{n-1}\left( \theta \right)}{U_n\left( \theta \right)},
			\label{eq3.14:energy}
		\end{equation}
		where $\theta =\frac{a}{2\sqrt{bc}}=-\mathrm{\mathbf{i}}\lambda $. Substituting \eqref{eq3.14:energy} into \eqref{eq3.13:energy} and the recurrence relation of Chebyshev polynomials, we have
		\begin{align*}
			&\lambda -\frac{3}{2}=\frac{1}{4U_n\left( \theta \right)}\left( -2U_{n-2}\left( \theta \right) +8\mathrm{\mathbf{i}}U_{n-1}\left( \theta \right) \right), 
			\\
			&2U_n\left( \theta \right) \left( \lambda -\frac{3}{2} \right) =-U_{n-2}\left( \theta \right) +4\mathrm{\mathbf{i}}U_{n-1}\left( \theta \right), 
			\\
			&2U_n\left( \theta \right) \left( \lambda -\frac{3}{2} \right) =-\left( 2\theta U_{n-1}\left( \theta \right) -U_n\left( \theta \right) \right) +4\mathrm{\mathbf{i}}U_{n-1}\left( \theta \right). 
		\end{align*}
		Simplifying the above equation we get
		\begin{equation*}
			\left( 2\lambda -4 \right) U_n\left( \theta \right) =\left( 2\lambda +4 \right) \mathrm{\mathbf{i}}U_{n-1}\left( \theta \right).
			\label{eq3.15:energy}
		\end{equation*}

		From the relationship between Chebyshev polynomials of the first kind and the second kind, it can be transformed into  
		\begin{equation}
			\left( 2\lambda -4 \right) T_n\left( \theta \right) =\left( 2\lambda ^2-2\lambda +4 \right) \mathrm{\mathbf{i}}U_{n-1}\left( \theta \right).
			\label{eq3.16:energy}
		\end{equation}

		Taking the derivative of both sides of \eqref{eq3.16:energy} with respect to $\lambda$ yields 
		\begin{equation*}
			2T_n\left( \theta \right) -\left( 2\lambda -4 \right) nU_{n-1}\left( \theta \right) \mathrm{\mathbf{i}}=\left( 4\lambda -2 \right) \mathrm{\mathbf{i}}U_{n-1}\left( \theta \right) +\left( 2\lambda ^2-2\lambda +4 \right) \mathrm{\mathbf{i}}\frac{nT_n\left( \theta \right) -\theta U_{n-1}\left( \theta \right)}{\theta ^2-1}\left( -\mathrm{\mathbf{i}} \right).
		\end{equation*}
		Simplifying the above equation gives  
		\begin{equation}
			\begin{aligned}
				&\left( \lambda ^2+1 \right) \left[ 4\lambda -2+n\left( 2\lambda -4 \right) \right] \mathrm{\mathbf{i}}U_{n-1}\left( \theta \right) -2\left( \lambda ^2+1 \right) T_n\left( \theta \right)\\
				& -n\left( 2\lambda ^2-2\lambda +4 \right) T_n\left( \theta \right) -\mathrm{\mathbf{i}}\lambda \left( 2\lambda ^2-2\lambda +4 \right) U_{n-1}\left( \theta \right) =0.
			\end{aligned}
			\label{eq3.17:energy}
		\end{equation}
		
		We assume that \eqref{eq3.16:energy} has a multiple root  $\lambda_j$, and substitute \eqref{eq3.16:energy} into \eqref{eq3.17:energy} yields  
		%\begin{align*}
		%\lbrack &\left( \lambda _{j}^{2}+1 \right) \left( 2\lambda _j-4 \right) \left( 4\lambda _j-2+n\left( 2\lambda _j-4 \right) \right) -2\left( \lambda _{j}^{2}+1 \right) \left( 2\lambda %_{j}^{2}-2\lambda _j+4 \right) -n\left( 2\lambda _{j}^{2}-2\lambda _j+4 \right) ^2\\
		%&-\lambda _j\left( 2\lambda _j-4 \right) \left( 2\lambda _{j}^{2}-2\lambda _j+4 \right)  \rbrack U_{n-1}\left( \theta_j \right) =0,
		%\end{align*}
		%by simplifying the aforementioned equation, one obtains  
		\begin{equation}
			\left[ -\left( 4+8n \right) \lambda _{j}^{3}-12\lambda _{j}^{2} \right] U_{n-1}\left( \theta _j \right) =0.
			\label{eq3.18:energy}
		\end{equation}
		
		It is obvious that $-\left( 4+8n \right) \lambda _{j}^{3}-12\lambda _{j}^{2}$ has two roots as $0$ and one root as $-\frac{12}{4+8n}$, all of which are real roots. Thus, when $\lambda =\lambda _j$, $-\left( 4+8n \right) \lambda _{j}^{3}-12\lambda _{j}^{2}$ is not zero.
		For \eqref{eq3.18:energy} to hold, it can only be that  $U_{n-1}\left( \theta_j \right) =0$.
		Substituting this into \eqref{eq3.16:energy}, we have $T_n\left( \theta_j \right)=0$. This means that $\theta_j $  is a common root of  $T_n\left( \theta \right)$  and  $U_{n-1}\left( \theta \right)$. However, $T_n\left( \theta \right)$  and  $U_{n-1}\left( \theta \right)$   are Chebyshev polynomials of the first and second kinds respectively, and no common roots between them. Therefore, \eqref{eq3.11:energy} has  $n+1$  distinct roots, meaning the matrix  $\mathcal{B}$  has  $n+1$  distinct eigenvalues. Consequently, if $\mathcal{B}$ is of order $n$, there are $n$ distinct eigenvalues, so it is diagonalizable.
		
		By the properties of the first and second kind Chebyshev polynomials, we obtain  
		\begin{equation*}
			T_n\left( x \right) =\left( -1 \right) ^nT_n\left( -x \right) , \quad U_{n-1}\left( x \right) =\left( -1 \right) ^{n-1}U_{n-1}\left( -x \right).
		\end{equation*}
		If $x$ is a root, then 
		\begin{align*}
			U_{n-1}\left( -\bar{x} \right) +\mathrm{\mathbf{i}}U_{n-2}\left( -\bar{x} \right) &=\left( -1 \right) ^{n-1}\bar{U}_{n-1}\left( x \right) +\left( -1 \right) ^{n-1}\mathrm{\mathbf{i}}\bar{U}_{n-2}\left( x \right)\\ &=\left( -1 \right) ^{n-1}\left( \bar{U}_{n-1}\left( x \right) +\mathrm{\mathbf{i}}\bar{U}_{n-2}\left( x \right) \right)\\
			& =\left( -1 \right) ^{n-1}\left( -\mathrm{\mathbf{i}}\bar{T}_n\left( x \right) -\bar{T}_{n-1}\left( x \right) \right)\\
			& =\left( -1 \right) ^n\mathrm{\mathbf{i}}\bar{T}_n\left( x \right) -\left( -1 \right) ^{n-1}\bar{T}_{n-1}\left( x \right) \\
			&=\mathrm{\mathbf{i}}T_n\left( -\bar{x} \right) -T_{n-1}\left( -\bar{x} \right) .
		\end{align*}
		This implies that $-\bar{x}$ is also a root.
	\end{proof}
\end{theorem}

According to \autoref{thm3.2:pythagoras}, we conclude that $\mathcal{B}$ is diagonalizable. Let the diagonalization of $\mathcal{B}$ be $\mathcal{B} =V\varSigma{V}$, where $\varSigma{=\mathrm{diag}\left( \lambda _1,\cdots ,\lambda _n \right)}$ and 
\begin{equation}
	V=\left[ P_1,P_2,\cdots ,P_n \right] =\underset{:=E}{\underbrace{\mathrm{diag}\left( \mathrm{\mathbf{i}}^0,\mathrm{\mathbf{i}}^1,\cdots ,\mathrm{\mathbf{i}}^{n-1} \right) }}\underset{:=\varPhi}{\underbrace{ \begin{bmatrix}
				U_0\left( x_1 \right)  &  \cdots  &  U_0\left( x_n \right)\\
				\vdots  &  \cdots  &  \vdots\\
				U_{n-1}\left( x_1 \right)  &  \cdots  &  U_{n-1}\left( x_n \right)\\
	\end{bmatrix}}}=E\varPhi,
	\label{eq3.19:energy}
\end{equation}
where $\left\{ \lambda _j \right\} _{j=1}^{n}$ and $\left\{ x_j \right\} _{j=0}^{n-1}$ are given by \autoref{thm3.1:pythagoras}. In \eqref{eq3.19:energy}, $E$ is diagonal matrix and $\varPhi $ is a Vandermonde matrix \cite{2012Differential} defined by Chebyshev orthogonal polynomials. Thus, we can obtain 
\begin{equation}
	Cond_2\left( V \right) =Cond_2\left( I\varPhi \right) =Cond_2\left( \varPhi \right).
	\label{eq3.20:energy}
\end{equation}

The following theorem shows that $Cond_2\left( V \right) =\mathcal{O} \left( n^3 \right) $, which means that as $n$ increases, the rounding errors generated by the diagonalization process only increase moderately.
%\begin{lemma}
%Let $Q=\left\{ q_{ij} \right\} $ be an $n\times n$ matrix, we have $\mathrm{tr}\left( Q \right) =\sum_{i=1}^n{\lambda _i}$ and $\max \left\{ \lambda _1,\lambda _2,\cdots ,\lambda _n %\right\} \leqslant \sum_{i=1}^n{\left| q_{ii} \right|}$, where $\lambda_i$ is an eigenvalue of Q and $j,i=1,2,\cdots ,n$.
%\begin{proof}
%Let the characteristic polynomial of $Q$ be $f=\lambda ^n+a_{n-1}\lambda ^{n-1}+\cdots +a_1\lambda +a_0$. Then, according to Vieta's formulas\cite{sturmfels2002solving}, we can obtain %$a_{n-1}=-\left( \lambda _1+\lambda _2+\cdots +\lambda _n \right)$. Since $\mathrm{tr}\left( Q \right) =\sum_{i=1}^n{q_{ii}}$, and the characteristic polynomial of $Q$ is 
%\begin{equation*}
%\left| \lambda I-Q \right|=\begin{vmatrix}
	%	\lambda -q_{11}&		\cdots&		-q_{1n}\\
	%	\,\,\vdots&		\cdots&		\,\,\vdots\\
	%	-q_{n1}&		\cdots&		\lambda -q_{nn}\\
	%\end{vmatrix}.
	%\end{equation*} 
	%By expanding it, we can see that the coefficient of $\lambda ^{n-1}$ is $a_{n-1}=-\sum_{i=1}^n{q_{ii}}=-\mathrm{tr}\left( Q \right)$. Thus,  $\mathrm{tr}\left( Q \right) %=\sum_{i=1}^n{\lambda _i}$.
	%\end{proof}
	%\end{lemma}
	\begin{theorem}
		For the eigenvector matrix $V$ defined in \eqref{eq3.9:energy}, it holds
		\begin{equation}
			Cond_2\left( V \right) =\mathcal{O} \left( n^3 \right).
			\label{eq3.21:energy} 
		\end{equation}
		\begin{proof}
			\textbf{Step 1}: estimate $\left\| \varPhi \right\| _2$. Using the Christoffel-Darboux formula\cite{szego1975orthogonal}, we arrive at  
			\begin{equation}
				\left( \varPhi ^*\varPhi \right) _{jk}=\sum_{l=0}^{n-1}{U_l\left( \bar{x}_j \right)}U_l\left( x_k \right) =\frac{U_n\left( \bar{x}_j \right) U_{n-1}\left( x_k \right) -U_{n-1}\left( \bar{x}_j \right) U_n\left( x_k \right)}{2\left( \bar{x}_j-x_k \right)}.
				\label{eq3.23:energy}
			\end{equation}

			Let $\theta _j=\alpha _j+\mathrm{\mathbf{i}}\beta _j$, so $x_j=\cos \theta _j$. From the initial value of the iteration in \autoref{sec4.1}, we have $\alpha_j =\mathcal{O} \left( n^{-1} \right)$ and $\beta_j =\mathcal{O} \left( n^{-1} \right)$ ($j=1, \cdots, n$). We first estimate $\left| \sin \bar{\theta}_j\sin \theta _k \right|$ and $\left| \bar{x}_j-x_k \right|$. 
			\begin{equation*}
				\begin{aligned}
					\left| \sin \bar{\theta}_j \right|&=\left| \sin \left( \alpha _j-\mathrm{\mathbf{i}}\beta _j \right) \right|=\left| \sin \alpha _j\cos \mathrm{\mathbf{i}}\beta _j-\cos \alpha _j\sin \mathrm{\mathbf{i}}\beta _j \right|=\left| \sin \alpha _j\cosh \beta _j-\mathrm{\mathbf{i}}\cos \alpha _j\sinh \beta _j \right|\\
					&=\sqrt{\sin ^2\alpha _j\cosh ^2\beta _j+\cos ^2\alpha _j\sinh ^2\beta _j}\geqslant \left| \sinh \beta _j \right|\geqslant \left| \beta _j \right|=\mathcal{O} \left( n^{-1} \right).
				\end{aligned} 
			\end{equation*}
			Analogously, $\left| \sin \theta _k \right|\geqslant \mathcal{O} \left( n^{-1} \right)$. Thus $\left| \sin \bar{\theta}_j\sin \theta _k \right|\geqslant \mathcal{O} \left( n^{-2} \right)$.
			Now, we turn to estimate $\left| \bar{x}_j-x_k \right|$. We first have
			\begin{equation*}
				\begin{aligned}
					\sin \left( \frac{\bar{\theta}_j+\theta _k}{2} \right) &=\sin \left( \frac{\alpha _j-\mathrm{\mathbf{i}}\beta _j+\alpha _k+\mathrm{\mathbf{i}}\beta _k}{2} \right) =\sin \left( \frac{\alpha _j+\alpha _k+\mathrm{\mathbf{i}}\left( \beta _k-\beta _j \right)}{2} \right) \\
					&=\sin \left( \frac{\alpha _j+\alpha _k}{2} \right) \cos \left( \frac{\mathrm{\mathbf{i}}\left( \beta _k-\beta _j \right)}{2} \right) +\cos \left( \frac{\alpha _j+\alpha _k}{2} \right) \sin \left( \frac{\mathrm{\mathbf{i}}\left( \beta _k-\beta _j \right)}{2} \right)\\
					& =\sin \left( \frac{\alpha _j+\alpha _k}{2} \right) \cosh \left( \frac{\beta _k-\beta _j}{2} \right) +\mathrm{\mathbf{i}}\cos \left( \frac{\alpha _j+\alpha _k}{2} \right) \sinh \left( \frac{\beta _k-\beta _j}{2} \right).
				\end{aligned}
			\end{equation*}
			This implies that 
			\begin{equation*}
				\begin{aligned}
					\left| \sin \left( \frac{\bar{\theta}_j+\theta _k}{2} \right) \right|&=\sqrt{\sin ^2\left( \frac{\alpha _j+\alpha _k}{2} \right) \cosh ^2\left( \frac{\beta _k-\beta _j}{2} \right) +\cos ^2\left( \frac{\alpha _j+\alpha _k}{2} \right) \sinh ^2\left( \frac{\beta _k-\beta _j}{2} \right)}\\
					&\geqslant \left| \sinh \left( \frac{\beta _k-\beta _j}{2} \right) \right|\geqslant \left| \frac{\beta _k-\beta _j}{2} \right|=\mathcal{O} \left( n^{-1} \right).
				\end{aligned}
			\end{equation*}
			Similarly, $\left| \sin \left( \frac{\bar{\theta}_j-\theta _k}{2} \right) \right|\geqslant \mathcal{O} \left( n^{-1} \right)$. Therefore,
			\begin{equation*}
				\left| \bar{x}_j-x_k \right|=\left| \cos \bar{\theta}_j-\cos \theta _k \right|=2\left| \sin \left( \frac{\bar{\theta}_j+\theta _k}{2} \right) \sin \left( \frac{\bar{\theta}_j-\theta _k}{2} \right) \right|\geqslant \mathcal{O} \left( n^{-2} \right).
			\end{equation*}
			We proceed to estimate $\left| \sin \left[ \left( n+1 \right) \bar{\theta}_j \right] \sin \left( n\theta _k \right) -\sin \left[ \left( n+1 \right) \theta _k \right] \sin \left( n\bar{\theta}_j \right) \right|$. Since $n\beta _j=\mathcal{O} \left( 1 \right)$,
			\begin{equation*}
				\begin{aligned}
					\left| \sin \left( n\bar{\theta}_j \right) \right|&=\left| \sin \left( n\left( \alpha _j-\mathrm{\mathbf{i}}\beta _j \right) \right) \right|=\left| \sin n\alpha _j\cosh n\beta _j-\mathrm{\mathbf{i}}\cos n\alpha _j\sinh n\beta _j \right|\\
					&=\sqrt{\sin ^2n\alpha _j\cosh ^2n\beta _j+\cos ^2n\alpha _j\sinh ^2n\beta _j}\leqslant \left| \cosh n\beta _j \right|,
				\end{aligned}
			\end{equation*}
			this implies that $\left| \cosh n\beta _j \right|$ is bounded. Thus, $\left| \sin \left[ \left( n+1 \right) \bar{\theta}_j \right] \sin \left( n\theta _k \right) -\sin \left[ \left( n+1 \right) \theta _k \right] \sin \left( n\bar{\theta}_j \right) \right|$ is bounded, we can define $\left| \sin \left[ \left( n+1 \right) \bar{\theta}_j \right] \sin \left( n\theta _k \right) -\sin \left[ \left( n+1 \right) \theta _k \right] \sin \left( n\bar{\theta}_j \right) \right|\leqslant C$ ($C$ is a constant).   From \eqref{eq3.23:energy} for any $j,k=1,2,\cdots ,n$, we get
			\begin{align*}
				\left| \left( \varPhi ^*\varPhi \right) _{jk} \right|&=\left| \sum_{l=0}^{n-1}{U_l\left( \bar{x}_j \right)}U_l\left( x_k \right) \right|=\left| \frac{U_n\left( \bar{x}_j \right) U_{n-1}\left( x_k \right) -U_{n-1}\left( \bar{x}_j \right) U_n\left( x_k \right)}{2\left( \bar{x}_j-x_k \right)} \right|\\
				&=\left| \frac{\sin \left[ \left( n+1 \right) \bar{\theta}_j \right] \sin \left( n\theta _k \right) -\sin \left[ \left( n+1 \right) \theta _k \right] \sin \left( n\bar{\theta}_j \right)}{2\sin \bar{\theta}_j\sin \theta_k \left( \bar{x}_j-x_k \right)} \right|\\
				&\leqslant \frac{C}{2\mathcal{O} \left( n^{-2} \right) \mathcal{O} \left( n^{-2} \right)}=\mathcal{O} \left( n^4 \right).  
			\end{align*}
			
			Then $\sum_{j=1}^n{\left| \left( \varPhi ^*\varPhi \right) _{jk} \right|}\leqslant \mathcal{O} \left( n^5 \right)$ and 
			\begin{equation}
				\left\| \varPhi \right\| _2=\sqrt{\rho \left( \varPhi ^*\varPhi \right)}\leqslant \sqrt{\left\| \varPhi ^*\varPhi \right\| _1}\leqslant \mathcal{O} \left( n^{\frac{5}{2}} \right).
				\label{eq3.24:energy}
			\end{equation}
			\textbf{Step 2}: estimate $\left\| \varPhi^{-1} \right\| _2$. Let
			\begin{equation}
				\begin{aligned}
					L_j\left( x \right) &=\prod_{1\leqslant k\leqslant n,k\ne j}^{}{\frac{x-x_k}{x_j-x_k}}\\
					&=\frac{U_{n-1}\left( x \right) +\mathrm{\mathbf{i}}U_{n-2}\left( x \right) -\mathrm{\mathbf{i}}T_n\left( x \right) +T_{n-1}\left( x \right)}{\left( x-x_j \right) \left[ {U'}_{n-1}\left( x \right) +{\mathrm{\mathbf{i}}U'}_{n-2}\left( x \right) -{\mathrm{\mathbf{i}}T'}_n\left( x \right) +{T'}_{n-1}\left( x \right) \right]}, \quad j=1, 2, \cdots, n,
					\label{eq3.22:energy}
				\end{aligned}
			\end{equation}
			the Lagrange interpolation polynomial $L_j\left( x_k \right) =\delta _{jk}$ and both $x_k$, $x_j$ are roots of \eqref{eq3.1:energy}, where $\delta _{jk}$ is the standard Kronecker delta.
			We denote $W={\left( w_{jk} \right)} ^{n}_{j,k=1}=\varPhi ^{-1}$, and obtain from orthogonality and Gaussian quadeature formula that 
			\begin{equation}
				w_{jk}=\frac{2}{\pi}\int_{-1}^1{L_j\left( x \right) U_{k-1}\left( x \right) \sqrt{1-x^2}dx}=\sum_{s=1}^n{\frac{2\left( 1-{y_s}^2 \right)}{n+1}L_j}\left( y_s \right) U_{k-1}\left( y_s \right),
				\label{eq3.25:energy}
			\end{equation}
			where $y_s=\cos \theta _s$ with $\theta _s=\frac{s\pi}{n+1}$ ($s=1,\cdots ,n$). 
			A simple calculation yields 
			\begin{equation*}
				\left( WW^* \right) _{jk}=\sum_{s=1}^n{\frac{2\left( 1-{y_s}^2 \right)}{n+1}}L_j\left( y_s \right) \bar{L}_k\left( y_s \right).
			\end{equation*}
			We denote
			\begin{align*}
				&\left| L_p\left( y_s \right) \bar{L}_q\left( y_s \right) \right|=\max \left\{ \left| L_j\left( y_s \right) \bar{L}_k\left( y_s \right) \right| \right\} (j,k=1,\cdots,n),\\
				&\left| L_p\left( y_m \right) \bar{L}_q\left( y_m \right) \right|=\max \left\{ \left| L_p\left( y_1 \right) \bar{L}_q\left( y_1 \right) \right|, \cdots , \left| L_p\left( y_n \right) \bar{L}_q\left( y_n \right) \right| \right\}.
			\end{align*} 
			Thus, for $k=1, \cdots, n$, we conclude that
			\begin{align*}
				\sum_{j=1}^n{\left| \left( WW^* \right) _{jk} \right|}&= \sum_{j=1}^n \left|\sum_{s=1}^n{\frac{2\left( 1-{y_s}^2 \right)}{n+1}}L_j\left( y_s \right) \bar{L}_k\left( y_s \right) \right|\leqslant \frac{2}{n+1}\sum_{s=1}^n{\left( 1-y_s \right) \sum_{j=1}^n{\left| L_j\left( y_s \right) \bar{L}_k\left( y_s \right) \right|}}\\
				&\leqslant \frac{2n}{n+1} \sum_{s=1}^n{\left( 1-y_s \right) \left| L_p\left( y_s \right) \bar{L}_q\left( y_s \right) \right|} \leqslant \frac{2n}{n+1}\sum_{s=1}^n{\left| L_p\left( y_s \right) \bar{L}_q\left( y_s \right) \right|}\\
				&\leqslant \frac{2n^2}{n+1}\left| L_p\left( y_m \right) \bar{L}_q\left( y_m \right) \right|=\mathcal{O} \left( n \right),
			\end{align*}
			and
			\begin{equation}
				\left\| W \right\| _2=\sqrt{\rho \left( WW^* \right)}\leqslant \sqrt{\left| W^*W \right|_1}=\mathcal{O} \left( n^{\frac{1}{2}} \right). 
				\label{eq3.26:energy}
			\end{equation}
			\textbf{Step 3}: estimate $Cond_2\left( V \right)$. Combining \eqref{eq3.24:energy} and \eqref{eq3.26:energy}, gives $Cond_2\left( V \right) =Cond_2\left( \varPhi \right) =\left\| \varPhi \right\| _2\left\| \varPhi ^{-1} \right\| _2=\left\| \varPhi \right\| _2\left\| W \right\| _2=\mathcal{O} \left( n^3 \right)$.
		\end{proof}
	\end{theorem}
	
	\section{Fast implementation}
	\label{sec4}
	We observe that the acceleration of our algorithm \eqref{eq2.6:energy} can be achieved by fast computation of the spectral decomposition of $B$ and the inversion of $V^{-1}$. In this section, we present a fast algorithm for  $V^{-1}$  and a fast spectral decomposition for  $B$.
	
	\subsection{Fast spectral decomposition of $B$}
	\label{sec4.1}
	The spectral decomposition of the time-discrete matrix  $B=VDV^{-1}$  is crucial in our PinT algorithm \eqref{eq2.6:energy}. The eigenvalues  $\lambda_j$  can be calculated via Newton's iterative method. According to \autoref{thm3.1:pythagoras}, we have  $\lambda _j=\mathrm{\mathbf{i}}\cos \left( \theta _j \right)$, where $\theta _j$ is the $j$th root of 
	\begin{equation*}
		\rho \left( \theta \right) :=\sin \left( n\theta \right) +\mathrm{\mathbf{i}}\sin \left( \left( n-1 \right) \theta \right) +\sin \left( \theta \right) \cos \left( \left( n-1 \right) \theta \right) -\mathrm{\mathbf{i}}\sin \left( \theta \right) \cos \left( n\theta \right) =0.
	\end{equation*}
	Applying Newton’s iteration to this nonlinear equation, we have
	\begin{equation}
		\theta _{j}^{\left( l+1 \right)}=\theta _{j}^{\left( l \right)}-\frac{\rho \left( \theta _{j}^{\left( l \right)} \right)}{\rho' \left( \theta _{j}^{\left( l \right)} \right)}, \quad l=0,1,2,\cdots .
		\label{eq4.1:energy}
	\end{equation}
	
	The Newton iteration executes $n$ cycles corresponding to the $n$ eigenvalues $\lambda_j$. The number of iterations for all  $n$  eigenvalues remains almost constant, so the complexity of calculating all eigenvalues via the Newton iteration in \eqref{eq4.1:energy} is  $\mathcal{O} \left( n \right)$. However, selecting  $n$  initial guesses  $\left\{ \theta _{j}^{\left( l \right)} \right\} _{j=1}^{n}$  is very difficult, as  $n$  iterations must converge to  $n$  distinct values—meaning not all eigenvalues can be found. Here, we choose the iterative initial values as  
	\begin{equation*}
		\theta _{j}^{\left( 0 \right)}=j\pi \left( \frac{c}{n}+\frac{1-c}{n+1} \right) +\mathrm{\mathbf{i}}\left( \frac{1-c}{n-2}+\frac{c}{n-1} \right), \quad c=0.7, j=1,2,\cdots ,n.
	\end{equation*}
	This ensures that the iteration \eqref{eq4.1:energy} converges to the $n$ different eigenvalues correctly.
	\subsection{ A fast $\mathcal{O} \left( n^2 \right) $ algorithm for computing $V^{-1}$}
	According to \eqref{eq3.19:energy}, the eigenvector matrix $V=E\varPhi$. To compute  $V^{-1}=\varPhi^{-1}E^{-1}$  is essentially to determine  $W=\varPhi^{-1}$. In this part, we propose a fast method for calculating  $V^{-1}$.
	
	Applying the recurrence relation of the second kind Chebyshev polynomials $2yU_j\left( y \right) =U_{j+1}\left( y \right) +U_{j-1}\left( y \right)$ gives 
	\begin{equation}
		\begin{aligned}
			4{y_s}^2U_{k-1}\left( y_s \right) &=2y_s\left[ U_{k-2}\left( y_s \right) +U_k\left( y_s \right) \right]\\
			&= \begin{cases}
				U_{k-3}\left( y_s \right) +2U_{k-1}\left( y_s \right) +U_{k+1}\left( y_s \right) , \quad 2\leqslant k\leqslant n-1,\\
				U_{k-1}\left( y_s \right) +U_{k+1}\left( y_s \right) , \quad  k=1,\\
				U_{k-3}\left( y_s \right) +U_{k-1}\left( y_s \right) , \quad k=n.\\
			\end{cases}
		\end{aligned}
		\label{eq4.2:energy}
	\end{equation}
	Then, it follows from \eqref{eq3.25:energy} that
	\begin{equation}
		\begin{aligned}
			2w_{jk}&=\frac{1}{n+1}\sum_{s=1}^n 4L_j\left( y_s \right) U_{k-1}\left( y_s \right) -\frac{1}{n+1} \sum_{s=1}^n 4y_{s}^{2}L_j\left( y_s \right) U_{k-1}\left( y_s \right) \\
			&= \begin{cases}
				2\psi _{j,k}-\psi _{j,k-2}-\psi _{j,k+2}, \quad 2\leqslant k\leqslant n-1,\\
				3\psi _{j,k}-\psi _{j,k-2},  \quad k=1,\\
				3\psi _{j,k}-\psi _{j,k+2}, \quad  k=n,\\
			\end{cases}
		\end{aligned}
		\label{eq4.3:energy}
	\end{equation}
	where $\psi _{j,k}=\frac{1}{n}\sum_{s=1}^n{L_j\left( y_s \right) U_{k-1}\left( y_s \right)}$. Since $U_n\left( y_s \right) =0$, we have $\psi _{j,n+1}=0$ for $j=1,2,\cdots ,n.$ Define 
	\begin{equation}
		p_n\left( x \right) =U_{n-1}\left( x \right) +T_{n-1}\left( x \right) +\mathrm{\mathbf{i}}U_{n-2}\left( x \right) -\mathrm{\mathbf{i}}T_n\left( x \right), \quad
		b_k=\frac{1}{n+1}\sum_{s=1}^n{p_n\left( y_s \right) U_{k-1}\left( y_s \right)}.
		\label{eq4.4:energy}
	\end{equation}

	From \eqref{eq3.22:energy}, we know that $p_n\left( x \right) =p_{n}^{\prime}\left( x \right) \left( y_s-x_j \right) L_j\left( y_s \right)$. Thus,

	\begin{equation}
		\frac{2b_k}{p_{n}^{\prime}\left( x_j \right)}=\frac{1}{n+1}\sum_{s=1}^n{2\left( y_s-x_j \right) L_j\left( y_s \right) U_{k-1}\left( y_s \right) =\psi _{j,k-1}+\psi _{j,k+1}-2x_j\psi _{j,k}}.
		\label{eq4.5:energy}
	\end{equation}

	To evaluate $b_k$, we investigate the integral of $p_n\left( x \right) U_{k-1}\left( x \right) \sqrt{1-x^2}$ on $[-1,1]$. According to \eqref{eq4.2:energy} and \eqref{eq4.4:energy}, we obtain by the Gaussian quadrature formula that 
	\begin{equation}
		\begin{aligned}
			\frac{4}{\pi}\int_{-1}^1 p_n\left( x \right) U_{k-1}\left( x \right) \sqrt{1-x^2}dx
			&=\frac{1}{n+1}\sum_{s=1}^n 4\left( 1-y_{s}^{2} \right) p_n\left( y_s \right) U_{k-1}\left( y_s \right)\\
			&= \begin{cases}
				2b_k-b_{k-2}-b_{k+2}, \quad 2\leqslant k\leqslant n-1,\\
				3b_k-b_{k+2}, \quad k=1,\\
				3b_k-b_{k-2}, \quad k=n.\\
			\end{cases}
		\end{aligned}
		\label{eq4.6:energy}
	\end{equation}

	Additionally, it can be calculated according to the orthogonality of Chebyshev polynomials that 
	\begin{equation}
		\frac{4}{\pi}\int_{-1}^1{p_n\left( x \right) U_{k-1}\left( x \right) \sqrt{1-x^2}dx}=2\delta _{n-1,k-1}+\delta _{n-1,k-1}-\delta _{n-3,k-1}+3\mathrm{\mathbf{i}}\delta _{n-2,k-1}-\mathrm{\mathbf{i}}\delta _{n,k-1}.
		\label{eq4.7:energy}
	\end{equation}
	By combining Eqs. \eqref{eq4.6:energy} and \eqref{eq4.7:energy}, a linear system is obtained 
	\begin{equation}
		S_n\boldsymbol{b}:=\begin{pmatrix}
			3&		0&		-1&		&		&		&		\\
			0&		2&		0&		-1&		&		&		\\
			-1&		0&		2&		0&		-1&		&		\\
			&		\ddots&		\ddots&		\ddots&		\ddots&		\ddots&		\\
			&		&		-1&		0&		2&		0&		-1\\
			&		&		&		-1&		0&		2&		0\\
			&		&		&		&		-1&		0&		3\\
		\end{pmatrix} \left( \begin{array}{c}
			b_1\\
			b_2\\
			b_3\\
			\vdots\\
			b_{n-2}\\
			b_{n-1}\\
			b_n\\
		\end{array} \right) =\left( \begin{array}{c}
			0\\
			0\\
			0\\
			\vdots\\
			-1\\
			3\mathrm{\mathbf{i}}\\
			3\\
		\end{array} \right).
		\label{eq4.8:energy}
	\end{equation}
	
	Let $\varPsi =\left[ \psi _{jk} \right]$. A fast algorithm for computing  $W=\varPhi ^{-1}$  is given by the following three steps.
	
	Step-1: solve $\boldsymbol{b}=\left[ b_1,b_2,\cdots ,b_n \right] ^\top$  from \eqref{eq4.8:energy}, and the computational complexity is  $\mathcal{O} \left( n \right) $.
	
	Step-2: Based on the fact $\psi _{j,n+1}=0$, the $j$th row $\boldsymbol{\varPsi }_j=\left[ \psi _{j,1},\varPsi _{j,2},\cdots ,\psi _{j,n} \right] $ of $\varPsi$ can be solved from a sequence of sparse tridiagonal linear systems 
	\begin{equation}
		G_j\psi _{j}^{T}:=\text{Tri}\mathrm{diag}\left\{ 1,-2x_j,1 \right\} \boldsymbol{\varPsi }_{j}^{T}=\frac{2}{p_{n}^{\prime}\left( x_j \right)}\boldsymbol{b}, \quad j=1,2, \cdots, n.
		\label{eq4.9:energy}
	\end{equation}
	Based on the fast Thomas algorithm applied to each system, the computational complexity is $\mathcal{O} \left( n^2 \right)$.

	Step-3: Compute $W=\frac{1}{2}\varPsi S_n$, which needs $\mathcal{O} \left( n^2 \right)$ operations since $S_n$ is a sparse matrix.
	
	Thus, the matrix  $V^{-1}$  can be computed with the complexity of  $\mathcal{O} \left( n^2 \right)$.
	\section{Numerical experiments}
	\label{sec5}
	In this section, we present several  numerical experiments to validate the acceleration efficiency of the proposed direct PinT algorithm \eqref{eq2.6:energy}. We also provide a comparison of CPU time between our fast spectral decomposition of $B$ in \autoref{sec4} and the MATLAB built-in function $\texttt{eig}$. 
	All experiments are conducted using MATLAB2022a on a Intel(R) Core(TM) i7-14700K  3.40GHz of processor and 32GB of RAM.
	\subsection{Fast spectral decomposition of $B$}
	\label{sec5.1}
	In \autoref{sec4}, we introduce the fast spectral decomposition of $B$  and the fast solution for  $V^{-1}$, improving the parallel efficiency of step-(b) by explicitly constructing the matrix  $V^{-1}$.
	Let $B=V_{eig}D_{eig}V_{eig}^{-1}$ and $B=V_{fast}D_{fast}V_{fast}^{-1}$ are the spectral decompositions corresponding to the \texttt{eig} and our fast algorithm, respectively. Define the maximum relative difference as  
	\begin{table}[h!]
		\centering
		\caption{Comparison of \texttt{eig+mrdivide} solver and our fast spectral decomposition algorithm.}
		\label{tab:comparison}
		\begin{tabular}{ccccccc}
			\toprule
			$n$ & \multicolumn{2}{c}{MATLAB's \texttt{eig+mrdivide}} & \multicolumn{4}{c}{Our fast algorithm} \\
			\cmidrule(lr){2-3} \cmidrule(lr){4-7}
			& CPU (s) & $\omega_{\text{eig}}$ & Iter & CPU (s) & $\omega_{\text{fast}}$ & $\eta_{\text{fast}} $ \\
			\midrule
			32    &0.002   &$2.40 \times 10^{-14}$   &8   &0.003   & $2.30 \times 10^{-13}$ &$2.56 \times 10^{-14}$\\
			64    & 0.003  & $1.34 \times 10^{-13}$ & 9  & 0.003  & $1.20 \times 10^{-11}$ & $1.02 \times 10^{-14}$ \\
			128   & 0.011  & $1.10 \times 10^{-12}$ & 10  & 0.005  & $3.62 \times 10^{-11}$ & $3.56 \times 10^{-14}$ \\
			256   & 0.050  & $7.37 \times 10^{-12}$ & 10  & 0.023  & $3.47 \times 10^{-10}$ & $2.45 \times 10^{-13}$ \\
			512   & 0.254  & $3.32 \times 10^{-11}$ & 11  & 0.076  & $2.20 \times 10^{-9}$ & $1.42 \times 10^{-12}$ \\
			1024  & 1.285  & $2.00 \times 10^{-10}$ & 12  & 0.306  & $1.90 \times 10^{-8}$ & $7.49 \times 10^{-12}$ \\
			2048  & 7.981  & $1.18 \times 10^{-9}$ & 13  & 1.496  & $6.16 \times 10^{-7}$ & $1.65 \times 10^{-11}$ \\
			4096  & 67.599 & $9.00 \times 10^{-9}$ & 13 & 8.626  & $9.65 \times 10^{-6}$ & $1.17 \times 10^{-10}$ \\
			8192  & 580.663 & $7.56 \times 10^{-8}$ & 14 & 62.270 & $1.16 \times 10^{-4}$ & $1.69 \times 10^{-9}$ \\
			\bottomrule
		\end{tabular}
	\end{table}
	\begin{equation*}
		\eta _{fast\,\,}:=\frac{\left\| D_{eig}-D_{fast} \right\| _F}{\left\| D_{eig} \right\| _F}
	\end{equation*}
	and 
	\begin{equation*}
		w_{eig\,\,}:=\frac{\left\| B-V_{eig}D_{eig}V_{eig}^{-1} \right\| _F}{\left\| B \right\| _F}, \quad w_{fast}\,\,:=\frac{\left\| B-V_{fast}D_{fast}V_{fast}^{-1} \right\| _F}{\left\| B \right\| _F}.
	\end{equation*}
	
	\cref{tab:comparison} shows the comparison of CPU time between using the $\texttt{eig}$ and our proposed fast algorithm for spectral decomposition. The CPU time is estimated by the tic/toc function in MATLAB. The Iter column represents the number of Newton iterations required to reach the tolerance  $tol=10^{-10}$. The CPU time growth of our fast algorithm is much lower than that of the $\texttt{eig}$. Especially when  $n = 8192$, the time required by the $\texttt{eig}$ is approximately 10 times that of the fast algorithm. Considering the effects of discretization errors and rounding errors, \cref{tab:comparison} shows that due to the condition number of the eigenvector matrix  $V$, the results of  $w_{fast}$  are not ideal. However, for $\eta _{fast}$, the eigenvalues obtained by the two algorithms are basically the same.
	\subsection{Numerical performance}
	\label{sec5.2}
	In this section, we present four numerical experiments for the second-order and third-order cases respectively to verify the efficiency of our proposed direct PinT algorithm. Additionally, the examples used in this section are from \cite{BRATSOS2007251,2019fast}.
	
	First, we present two numerical examples of second-order time-dependent differential equations, and compare their acceleration times under different numbers of cores (denoted as $s$).
	
	\noindent\textbf{Example 1.}(\cite{2019fast}) In this example, we consider the two-dimensional nonlinear equation with the Dirichlet boundary conditions:
	\begin{equation}
		\begin{cases}
			\frac{\partial ^2u}{\partial t^2}+\frac{1}{4}\frac{\partial u}{\partial t}=\varDelta u+u-u^2+f\left( x,y,t \right) , \quad  \left( x,y \right) \in \Omega=[0, 2 \pi]^2 ,  t\in \left( 0,T=1 \right] ,\\
			u\left( t,x,y \right) =0,~~\mathrm{on}\,\,\partial \varOmega \times \left[ 0,T \right] ,\\
			u\left( 0,x,y \right) =\sin \left( x \right) \sin \left( y \right) ,  \quad \left( x,y \right) \in \varOmega ,\\
			u_t\left( 0,x,y \right) =0,   \quad   \left( x,y \right) \in \varOmega.
		\end{cases}
		\label{eq5.1:energy}
	\end{equation}
	The exact solution is chosen to be 
	\begin{equation*}
		u\left( x,y,t \right) =\cos \left( t \right) \sin \left( x \right) \sin \left( y \right),
	\end{equation*}
	and the source term $f\left( x,y,t \right) $ is determined correspondingly. Using a central finite difference to approximate $\varDelta $, and approximating with a uniform mesh size  $h$  in both  $x$ and  $y$ directions, the following ODE system is obtained:
	\begin{equation}
		\boldsymbol{u}_{h}^{\prime \prime}\left( t \right) +\frac{1}{4}\boldsymbol{u}_{h}^{\prime}=\varDelta _h\boldsymbol{u}_h\left( t \right) +\boldsymbol{u}_h\left( t \right) -\boldsymbol{u}_{h}^{2}\left( t \right) +\boldsymbol{f}_h\left( t \right) , \quad \boldsymbol{u}_h\left( 0 \right) =\boldsymbol{u}_{0,h},
		\label{eq5.2:energy}
	\end{equation}
	where $\varDelta _h\in \mathbb{R} ^{m\times m}$ is the 5-point stencil Laplacian matrix, $\boldsymbol{u}_h, \boldsymbol{f}_h, \boldsymbol{u}_{0,h}$ represent the finite difference approximation of $u, f, u_0$ corresponding to $m$ spatial grid points. \cref{fig1:example} shows the CPU time requirement of our algorithm to run as $N_t$ gradually increases under different numbers of cores. We can see that the larger $N_t$ is, the more obvious the acceleration effect. When $N_t=2^{9}$, the CPU time corresponding to the number of cores $s$=20 is much smaller than that of $s$=1. It can be seen that as the number of cores continues to increase, the acceleration effect gradually decreases, a possible reason is that as the number of cores increases, the communication time may also increase. In addition, due to computer limitations, these experiments used a maximum of 20 cores for acceleration.
	\begin{figure}[h!]
		\centering
		\begin{tikzpicture}[scale=1]
			\begin{axis}[
				xlabel=$N_t$,
				ylabel=CPU,
				xtick={0,100,200,300,400,500,600},
				ytick={0,500,1000,1500,2000,2500,3000},
				%	ymin=0, ymax=3000, % 根据数据范围调整y轴范围
				legend pos=north west, % 图例位置
				grid=both, % 显示网格
				]
				\addplot[
				color=gray,
				mark=*, % 标记点形状
				]
				coordinates {
					(8, 17.895)
					(16, 35.734)
					(32, 71.096)
					(64, 143.809)
					(128, 287.686)
					(258,676.43)
					(512,2845.252) % 这里的x值0,1,2对应xticklabels的顺序，y值根据数据调整
				};
				\addlegendentry{s=1} % 图例标签
				
				\addplot[
				color=blue,
				mark=triangle*,
				]
				coordinates {
					(8, 9.276)
					(16, 18.046)
					(32, 35.962)
					(64, 72.908)
					(128, 145.102)
					(258,341.106)
					(512,1488.17)
				};
				\addlegendentry{s=2}
				
				\addplot[
				color=cyan,
				mark=square*,
				]
				coordinates {
					(8, 5.161)
					(16, 9.874)
					(32, 19.444)
					(64, 38.644)
					(128, 76.962)
					(258,187.129)
					(512,766.672)
				};
				\addlegendentry{s=4}
				
				\addplot[
				color=orange,
				mark=diamond*,
				]
				coordinates {
					(8, 3.605)
					(16, 6.734)
					(32, 13.017)
					(64, 24.974)
					(128, 49.043)
					(258,117.326)
					(512,430.952)
				};
				\addlegendentry{s=8}
				
				\addplot[
				color=red,
				mark=o,
				line width=1pt,
				mark size=1.5pt
				]
				coordinates {
					(8, 3.844)
					(16, 7.039)
					(32, 13.055)
					(64, 27.328)
					(128, 51.879)
					(258,108.009)
					(512,379.307)
				};
				\addlegendentry{s=12}
				
				\addplot[
				color=purple,
				mark=pentagon*, % 这里选择五边形标记点形状，可按需更换
				line width=0.8pt,
				mark size=1pt
				]
				coordinates {
					(8, 3.814)
					(16, 6.564)
					(32, 13.958)
					(64, 24.786)
					(128, 46.845)
					(258,98.677)
					(512,326.123)% 根据实际数据修改坐标值
				};
				\addlegendentry{s=20} 
			\end{axis}
		\end{tikzpicture}
		\caption{Comparison of the CPU time with different number of cores of Example 1, where $m=512^{2}$ and $N_t = 2^3, \cdots, 2^9$.} 
		\label{fig1:example}
	\end{figure}\\
	\noindent \textbf{Example 2.}(\cite{BRATSOS2007251}) In this example, we consider the two-dimensional Sine-Gordon equation with the homogeneous Dirichlet boundary conditions: 
	\begin{equation}
		\begin{cases}
			\frac{\partial ^2u}{\partial t^2}+\rho \frac{\partial u}{\partial t}=\varDelta u-\varphi \left( x,y \right) \sin \left( u \right), \quad  \left( x,y \right) \in \Omega=[0, 2 \pi]^2 , t\in \left( 0,T=1 \right] ,\\
			u\left( t,x,y \right) =0,  ~~\mathrm{on}\,\,\partial \varOmega \times \left[ 0,T \right] ,\\
			u\left( 0,x,y \right) =\sin \left( x \right) \sin \left( y \right), \quad \left( x,y \right) \in \varOmega ,\\
			u_t\left( 0,x,y \right) =0, \quad \left( x,y \right) \in \varOmega ,\\
		\end{cases} 
		\label{eq5.3:energy}
	\end{equation}
	where $\varDelta$ is the Laplace operator, $\varphi \left( x,y \right)$ is a suitable function, $\rho =-1$. The exact solution is chosen to be 
	\begin{equation*}
		u\left( t,x,y \right) =\cos \left( t \right) \sin \left( x \right) \sin \left( y \right).
	\end{equation*}
	
	Approximating with a uniform mesh size  $h$  in both  $x$ and  $y$ directions, and using the same discretization method in the spatial and temporal directions as in \eqref{eq5.1:energy}, it is transformed into the following ODE system: 
	\begin{equation}
		\boldsymbol{u}_{h}^{\prime \prime}+\rho \boldsymbol{u}_{h}^{\prime}=\varDelta \boldsymbol{u}_h-\boldsymbol{\varphi }_h\sin \left( \boldsymbol{u}_h \right), \quad \boldsymbol{u}_h\left( 0 \right) =\boldsymbol{u}_{0,h}.
		\label{eq5.4:energy}
	\end{equation}
	If we define  $\boldsymbol{f}_h=-\boldsymbol{\varphi }_h\sin \left( \boldsymbol{u}_h \right) $, then transform Eq.~\eqref{eq5.4:energy} into 
	\begin{equation}
		\boldsymbol{u}_{h}^{\prime \prime}+\rho \boldsymbol{u}_{h}^{\prime}=\varDelta \boldsymbol{u}_h+\boldsymbol{f}_h, \quad \boldsymbol{u}_h\left( 0 \right) =\boldsymbol{u}_{0,h}.
		\label{eq5.5:energy}
	\end{equation}
	\begin{figure}[h!]
		\centering
		\begin{tikzpicture}[scale=1]
			\begin{axis}[
				xlabel=$N_t$,
				ylabel=CPU,
				xtick={0,100,200,300,400,500,600},
				ytick={0,500,1000,1500,2000,2500,3000},
				%	ymin=0, ymax=3000, % 根据数据范围调整y轴范围
				legend pos=north west, % 图例位置
				grid=both, % 显示网格
				]
				\addplot[
				color=gray,
				mark=*, % 标记点形状
				]
				coordinates {
					(8, 17.253)
					(16, 34.953)
					(32, 69.684)
					(64, 140.425)
					(128, 281.121)
					(258,662.255)
					(512,2792.492) % 这里的x值0,1,2对应xticklabels的顺序，y值根据数据调整
				};
				\addlegendentry{s=1} % 图例标签
				
				\addplot[
				color=blue,
				mark=triangle*,
				]
				coordinates {
					(8, 9.089)
					(16, 17.96)
					(32, 35.904)
					(64, 71.598)
					(128, 143.813)
					(258,338.095)
					(512,1456.153)
				};
				\addlegendentry{s=2}
				
				\addplot[
				color=cyan,
				mark=square*,
				]
				coordinates {
					(8, 5.111)
					(16, 9.881)
					(32, 19.377)
					(64, 38.689)
					(128, 77.017)
					(258,185.989)
					(512,753.888)
				};
				\addlegendentry{s=4}
				
				\addplot[
				color=orange,
				mark=diamond*,
				]
				coordinates {
					(8, 3.747)
					(16, 6.886)
					(32, 12.757)
					(64, 24.735)
					(128, 48.803)
					(258,114.139)
					(512,424.01)
				};
				\addlegendentry{s=8}
				
				\addplot[
				color=red,
				mark=o,
				line width=1pt,
				mark size=1.5pt
				]
				coordinates {
					(8, 3.695)
					(16, 6.939)
					(32, 12.011)
					(64, 23.488)
					(128, 44.866)
					(258,107.117)
					(512,377.522)
				};
				\addlegendentry{s=12}
				
				\addplot[
				color=purple,
				mark=pentagon*, % 这里选择五边形标记点形状，可按需更换
				line width=0.8pt,
				mark size=1pt
				]
				coordinates {
					(8, 4.066)
					(16, 7.081)
					(32, 14.111)
					(64, 48.772)
					(128, 96.858)
					(258,98.677)
					(512,323.244)% 根据实际数据修改坐标值
				};
				\addlegendentry{s=20} 
			\end{axis}
		\end{tikzpicture}
		\caption{Comparison of the CPU time with different number of cores of Example 2, where $m=512^{2}$ and $N_t = 2^3, \cdots, 2^9$.} 
		\label{fig2:example}
	\end{figure}
	
	From \cref{fig2:example}, we can see that under different numbers of cores when $N_t$ increases, the acceleration effect becomes more significant. \cref{fig2:example} demonstrates that the curve exhibits progressively flat behavior with the growing number of cores. When $s\geqslant 8$, its speedup effect gradually diminishes. But $N_t=2^9$, the CPU time corresponding to $s=20$ is much smaller than that of $s$=1.
	
	Next, we present two numerical experiments of third-order PDEs, and also compare their computational efficiencies under different numbers of cores.\\
	\textbf{Example 3.}(\cite{2019fast}) In this example, we consider a third-order linear evolution equation with the homogeneous Dirichlet boundary conditions: 
	\begin{equation}
		\begin{cases}
			\frac{\partial ^3u}{\partial t^3}-\frac{\partial u}{\partial t}=\varDelta u-2u+f\left( t,x,y \right) , \quad \left( x,y \right) \in \Omega=[0, 2 \pi]^2 , t\in \left( 0,T=1 \right] ,\\
			u\left( t,x,y \right) =0,  ~~\mathrm{on}\,\,\partial \varOmega \times \left[ 0,T \right] ,\\
			u\left( 0,x,y \right) =\sin \left( x \right) \sin \left( y \right) ,  \quad  \left( x,y \right) \in \varOmega ,\\
			u_t\left( 0,x,y \right) =-2\sin \left( x \right) \sin \left( y \right) , \quad \left( x,y \right) \in \varOmega ,\\
			u_{tt}\left( 0,x,y \right) =4\sin \left( x \right) \sin \left( y \right) , \quad \left( x,y \right) \in \varOmega ,\\
		\end{cases}
		\label{eq5.6:energy}
	\end{equation}
	where $f\left( t,x,y \right) =-2e^{-2t}\sin \left( x \right) \sin \left( y \right)$. The exact solution is given by 
	\begin{equation*}
		u\left( t,x,y \right) =e^{-2t}\sin \left( x \right) \sin \left( y \right).
	\end{equation*}
	
	In the spatial and temporal directions, we use the same discretization scheme as that for the second-order case on a uniform grid. Similar to the second-order scenario, we first transform Eq.~\eqref{eq5.6:energy} into an ODE system, and then apply the proposed third-order one-shot system for calculation, the ODE system is denoted as  
	\begin{equation}
		\boldsymbol{u}_{h}^{\prime \prime \prime}-\boldsymbol{u}_{h}^{\prime}=\varDelta \boldsymbol{u}_h-2\boldsymbol{u}_h+\boldsymbol{f}_h, \quad \boldsymbol{u}_h\left( 0 \right) =\boldsymbol{u}_{0,h}.
		\label{eq5.7:energy}
	\end{equation}
	In \cref{fig3:example}, we can observe that as $N_t$ increases, the CPU grows in a relatively stable manner, the acceleration effect is more obvious when $N_t$ is large. Especially the CPU times corresponding to $s=1$ and $s=20$ when $N_t=2^9$.
	\begin{figure}[h!]
		\centering
		\begin{tikzpicture}[scale=1]
			\begin{axis}[
				xlabel=$N_t$,
				ylabel=CPU,
				xtick={0,100,200,300,400,500,600},
				ytick={0,500,1000,1500,2000,2500,3000},
				%	ymin=0, ymax=3000, % 根据数据范围调整y轴范围
				legend pos=north west, % 图例位置
				grid=both, % 显示网格
				]
				\addplot[
				color=gray,
				mark=*, % 标记点形状
				]
				coordinates {
					(8, 18.378)
					(16, 35.897)
					(32, 114.572)
					(64, 362.582)
					(128, 768.132)
					(258,1513.632)
					(512,2803.1) % 这里的x值0,1,2对应xticklabels的顺序，y值根据数据调整
				};
				\addlegendentry{s=1} % 图例标签
				
				\addplot[
				color=blue,
				mark=triangle*,
				]
				coordinates {
					(8, 9.577)
					(16, 18.548)
					(32, 61.145)
					(64, 187.098)
					(128, 388.075)
					(258,760.466)
					(512,1408.257)
				};
				\addlegendentry{s=2}
				
				\addplot[
				color=cyan,
				mark=square*,
				]
				coordinates {
					(8, 5.208)
					(16, 10.212)
					(32, 34.063)
					(64, 99.5)
					(128, 201.21)
					(258,389.435)
					(512,716.321)
				};
				\addlegendentry{s=4}
				
				\addplot[
				color=orange,
				mark=diamond*,
				]
				coordinates {
					(8, 3.759)
					(16, 7.025)
					(32, 21.314)
					(64, 56.827)
					(128, 112.481)
					(258,218.059)
					(512,408.826)
				};
				\addlegendentry{s=8}
				
				\addplot[
				color=red,
				mark=o,
				line width=1pt,
				mark size=1.5pt
				]
				coordinates {
					(8, 3.973)
					(16, 7.446)
					(32, 18.884)
					(64, 48.884)
					(128, 100.83)
					(258,201.019)
					(512,372.885)
				};
				\addlegendentry{s=12}
				
				\addplot[
				color=purple,
				mark=pentagon*, % 这里选择五边形标记点形状，可按需更换
				line width=0.8pt,
				mark size=1pt
				]
				coordinates {
					(8, 3.917)
					(16, 7.264)
					(32, 16.685)
					(64, 44.631)
					(128, 88.177)
					(258,173.572)
					(512,323.259) % 根据实际数据修改坐标值
				};
				\addlegendentry{s=20} 
			\end{axis}
		\end{tikzpicture}
		\caption{Comparison of the CPU time with different number of cores of Example 3, where $m=512^{2}$ and $N_t = 2^3, \cdots, 2^9$.} 
		\label{fig3:example}
	\end{figure}\\
	\noindent\textbf{Example 4.}(\cite{2019fast}) In this example, we consider the third-order nonlinear evolution equation:  
	\begin{equation}
		\begin{cases}
			\frac{\partial}{\partial t}\left( \frac{\partial ^2u}{\partial t^2}-\varDelta u \right) +\frac{\partial ^2u}{\partial t^2}-D\varDelta u=\beta \varDelta \left( u^2 \right) p\left( t,x,y \right) , \quad \left( x,y \right) \in \Omega=[0, 2 \pi]^2 , t\in \left( 0,T=1 \right] ,\\
			u\left( t,x,y \right) =0, ~~\mathrm{on}\,\,\partial \varOmega \times \left[ 0,T \right] ,\\
			u\left( 0,x,y \right) =\sin \left( x \right) \sin \left( y \right) , \quad \left( x,y \right) \in \varOmega,\\
			u_t\left( 0,x,y \right) =0, \quad \left( x,y \right) \in \varOmega,\\
			u_{tt}\left( 0,x,y \right) =-\sin \left( x \right) \sin \left( y \right) , \quad \left( x,y \right) \in \varOmega,\\
		\end{cases} 
		\label{eq5.8:energy}
	\end{equation}
	with the homogeneous Dirichlet boundary conditions, where $D=\frac{1}{2}$, $\beta$ is a suitable dimensionless constant. The exact solution is given by 
	\begin{equation*}
		u=\cos \left( t \right) \sin \left( x \right) \sin \left( y \right).
	\end{equation*}
	Define  $f\left( t,x,y \right) =\beta \varDelta \left( u^2 \right) p\left( t,x,y \right) $, so that similar to \eqref{eq5.6:energy}, transform \eqref{eq5.8:energy} into the following ODE system, namely  
	\begin{equation}
		\boldsymbol{u}_{h}^{\prime}\left( \boldsymbol{u}_{h}^{\prime \prime}-\varDelta \boldsymbol{u}_h \right) +\boldsymbol{u}_{h}^{\prime \prime}-D\varDelta \boldsymbol{u}_h=\boldsymbol{f}_h, \quad \boldsymbol{u}_h\left( 0 \right) =\boldsymbol{u}_{0,h}.
		\label{eq5.9:energy}
	\end{equation}
	In \cref{fig4:example}, when $N_t$ is large, the CPU with $s=20$ is significantly smaller than that with $s=1$.
	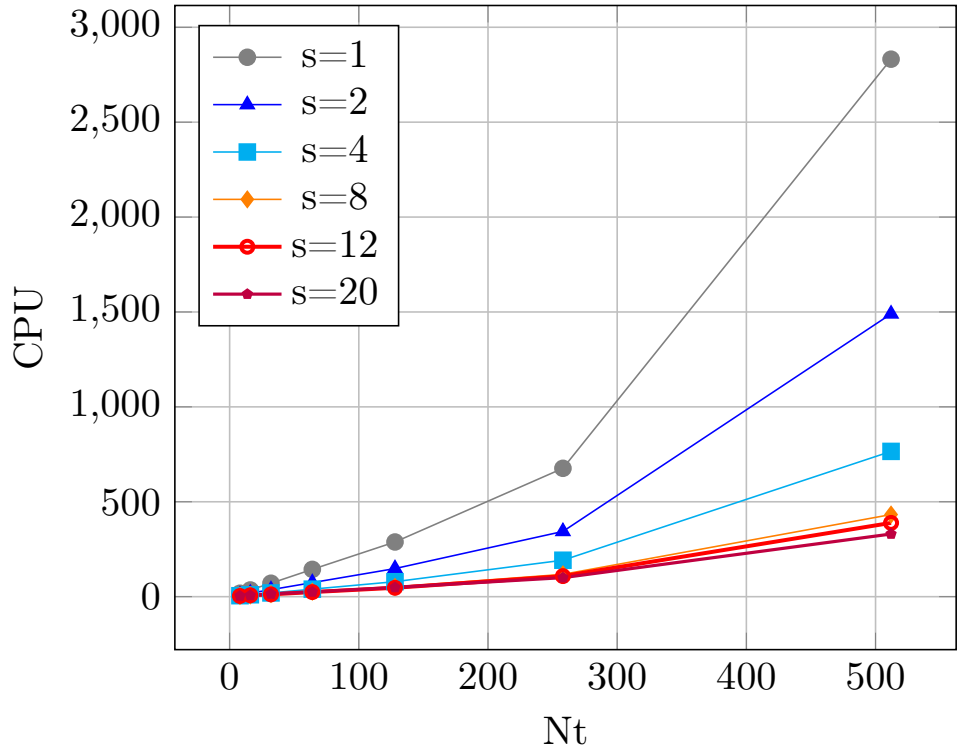
\begin{figure}[h!]
		\centering
		\begin{tikzpicture}[scale=1]
			\begin{axis}[
				xlabel=$N_t$,
				ylabel=CPU,
				xtick={0,100,200,300,400,500,600},
				ytick={0,500,1000,1500,2000,2500,3000},
				%	ymin=0, ymax=3000, % 根据数据范围调整y轴范围
				legend pos=north west, % 图例位置
				grid=both, % 显示网格
				]
				\addplot[
				color=gray,
				mark=*, % 标记点形状
				]
				coordinates {
					(8, 18.207)
					(16, 35.401)
					(32, 70.884)
					(64, 143.876)
					(128, 288.16)
					(258,676.346)
					(512,2831.89) % 这里的x值0,1,2对应xticklabels的顺序，y值根据数据调整
				};
				\addlegendentry{s=1} % 图例标签
				
				\addplot[
				color=blue,
				mark=triangle*,
				]
				coordinates {
					(8, 9.446)
					(16, 18.552)
					(32, 36.788)
					(64, 74.017)
					(128, 148.094)
					(258,343.689)
					(512,1489.782)
				};
				\addlegendentry{s=2}
				
				\addplot[
				color=cyan,
				mark=square*,
				]
				coordinates {
					(8, 5.148)
					(16, 10.053)
					(32, 19.662)
					(64, 39.618)
					(128, 78.975)
					(258,192.087)
					(512,765.478)
				};
				\addlegendentry{s=4}
				
				\addplot[
				color=orange,
				mark=diamond*,
				]
				coordinates {
					(8, 3.894)
					(16, 7.052)
					(32, 13.714)
					(64, 26.358)
					(128, 51.454)
					(258,117.704)
					(512,432.659)
				};
				\addlegendentry{s=8}
				
				\addplot[
				color=red,
				mark=o,
				line width=1pt,
				mark size=1.5pt
				]
				coordinates {
					(8, 3.796)
					(16, 7.24)
					(32, 12.193)
					(64, 24.267)
					(128, 46.74)
					(258,108.434)
					(512,388.709)
				};
				\addlegendentry{s=12}
				
				\addplot[
				color=purple,
				mark=pentagon*, % 这里选择五边形标记点形状，可按需更换
				line width=0.8pt,
				mark size=1pt
				]
				coordinates {
					(8, 3.885)
					(16, 6.801)
					(32, 13.905)
					(64, 24.888)
					(128, 48.816)
					(258,100.209)
					(512,330.115) % 根据实际数据修改坐标值
				};
				\addlegendentry{s=20} 
			\end{axis}
		\end{tikzpicture}
		\caption{Comparison of the CPU time with different number of cores of Example 4, where $m=512^{2}$ and $N_t = 2^3, \cdots, 2^9$.} 
		\label{fig4:example}
	\end{figure}
	\section{Conclusions}
	\label{sec6}
	In this paper, inspired by the technique in \cite{2022A}, we generalize it and apply the diagonalization-based direct time-parallel algorithm to solve time-dependent differential equations of orders 1 to 3. We derive the all-at-once systems for  $\boldsymbol{u}$  of orders 1 to 3 and provide explicit formulas for diagonalization of the time-discrete matrix $B$, that is, $B=VDV^{-1}$. We also prove the condition number  of the eigenvector matrix  $V$  corresponding to $B$. We present numerical experiments for the fast spectral decomposition algorithm. The results demonstrate that our fast algorithm is significantly faster than the MATLAB built-in function $\texttt{eig}$. Moreover, we also design a fast algorithm for the eigenvector matrix $V^{-1}$, which can effectively accelerate the computational efficiency when applied to the fast spectral decomposition algorithm of $B$. Finally, several examples are provided to illustrate the feasibility of our all-at-once system. This work provides a foundation for advancing research in wave equations, fluid dynamics, and related disciplines. For the algorithm proposed in this paper to compute $V^{-1}$, future work could further investigate this method, as it has the potential to significantly accelerate the computation of matrix inversion. 
	
	\section*{CRediT authorship contribution statement}
	\textbf{Shun-Zhi Zhong:} Formal analysis, Investigation, Methodology, Software, Validation, Visualization, Writing – original draft, Writing – review \& editing.
	\textbf{Yong-Liang Zhao:} Conceptualization, Methodology, Project administration, Supervision, Writing – review \& editing.
	\textbf{Qian-Yu Shu:} Conceptualization, Methodology, Project administration, Writing – review \& editing.
	
	\section*{Acknowledgments}
	\addcontentsline{toc}{section}{Acknowledgments}
	\label{sec7}
	\textit{This work is supported by the National Natural Science Foundation of China (12401536), and Sichuan Science and Technology Program (2024NSFSC0441).}
	
	\section*{Data availability}
	The corresponding codes will be publicly available once our work is published.
	
	\section*{Conflict of interests}
   The authors declare that they have no known competing financial interests or personal relationships that could have appeared to influence the work reported in this paper.

%\linenumbers
\bibliographystyle{elsarticle-num} 
\bibliography{Ref}
\end{document}